    \newenvironment{dedication}
        {\vspace{6ex}\begin{quotation}\begin{center}\begin{em}}
        {\par\end{em}\end{center}\end{quotation}}
\newtheorem{prop}{Proposition}[section]
\newtheorem{thm}[prop]{Theorem}
\newtheorem{cor}[prop]{Corollary}
\newtheorem{conj}[prop]{Conjecture}
\newtheorem{lem}[prop]{Lemma}
\theoremstyle{definition}
\newtheorem{defn}[prop]{Definition}
\newtheorem{exmp}[prop]{Example}
\newtheorem{rem}[prop]{\it Remark}
\newtheorem{say}[prop]{}
\newtheorem{lem-defn}[prop]{Lemma-Definition}
\newtheorem{defn-lem}[prop]{Definition-Lemma}
\newtheorem{claim}[prop]{Claim}
\newtheorem{construction}[prop]{Construction}
\newtheorem*{claim*}{Claim}
\newcommand{\bR}{\mathbb{R}}
\newcommand{\bA}{\mathbb{A}}
\newcommand{\bQ}{\mathbb{Q}}
\newcommand{\bZ}{\mathbb{Z}}
\newcommand{\bN}{\mathbb{N}}
\newcommand{\bG}{\mathbb{G}}
\newcommand{\bin}{{\rm {\bf in}}}
\newcommand{\hvol}{\widehat{\rm vol}}
\newcommand{\cX}{\mathcal{X}}
\newcommand{\cY}{\mathcal{Y}}
\newcommand{\cO}{\mathcal{O}}
\newcommand{\cR}{\mathcal{R}}
\newcommand{\cE}{\mathcal{E}}
\newcommand{\cP}{\mathcal{P}}
\newcommand{\fa}{\mathfrak{a}}
\newcommand{\fb}{\mathfrak{b}}
\newcommand{\fc}{\mathfrak{c}}
\newcommand{\fm}{\mathfrak{m}}
\newcommand{\Spec}{\mathrm{Spec}~}
\newcommand{\mult}{\mathrm{mult}}
\newcommand{\lct}{\mathrm{lct}}
\newcommand{\ord}{\mathrm{ord}}
\newcommand{\wt}{\mathrm{wt}}
\newcommand{\Val}{\mathrm{Val}}
\newcommand{\gr}{\mathrm{gr}}
\newcommand{\ui}{\underline{i}}
\newcommand{\DR}{\mathcal{DR}}
\newcommand{\LC}{{\rm LC}}
\numberwithin{equation}{section}
\begin{document}

\title[Toward higher rank finite generation]{Toward finite generation of higher rational rank valuations}

\begin{dedication}
\hspace{0cm}
\vspace*{1cm}{Dedicated to Vyacheslav Shokurov's Seventieth Birthday}
\end{dedication}

\author{Chenyang Xu}
\address{Current address: Department of Mathematics, Princeton University, Princeton, NJ, 08544.}
\email{chenyang@math.princeton.edu}
\address{Department of Mathematics, MIT, Cambridge, MA, 02139.}
\email{cyxu@math.mit.edu}
\address{BICMR, Peking University, Beijing, 100871.}
\email{cyxu@math.pku.edu.cn}

\date{}

\begin{abstract}{We propose a finite generation conjecture for the valuation which computes the stability threshold of a log Fano pair. We also initiate a degeneration strategy of attacking the conjecture.  
}\end{abstract}

\maketitle

\setcounter{tocdepth}{1}
\tableofcontents

\section{Introduction}
The finite generation of the ring given by the sections of all multiples of a fixed divisor is a central question  in algebraic geometry. While the answer is not always affirmative for any arbitrary divisor, the minimal model program provides a very powerful tool to establish the finite generation when the divisor is given by the log canonical class, see e.g. \cite{BCHM}.

In \cite{Li-minimizer, LX-SDCII}, we posted a series of conjectures, called \emph{the Stable Degeneration Conjecture}, which attaches to \emph{any} Kawamata log terminal (klt) singularity a unique K-semistable log Fano cone degeneration, through the minimizing valuation of the normalised volume function defined in \cite{Li-minimizer}.  There have been many works toward this conjecture, especially it is known that any minimizer is quasi-monomial and an lc place of a $\bQ$-complement by \cite{Xu-quasimonomial}. The only remaining part is the following.

\begin{conj}[{\cite{Li-minimizer}}]\label{c-weak}
Let $x\in (X={\rm Spec} \ R,\Delta)$ be a germ of klt singularity. Let $v$ be a (quasi-monomial) minimizer of $\hvol_{X,\Delta}$, then the associated graded ring ${\rm gr}_v R$ is finitely generated. 
\end{conj}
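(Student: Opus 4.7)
The plan is to induct on the rational rank $r$ of $v$, combining the complement structure of $v$ with a stepwise degeneration procedure. Since $v$ is an lc place of some $\bQ$-complement $(X,\Delta+D)$ by \cite{Xu-quasimonomial}, I would first pass to a dlt modification $\pi\colon (Y, \Delta_Y+D_Y)\to (X,\Delta+D)$ on which the center of $v$ is the generic point of a codimension-$r$ stratum $W$ of $\lfloor \Delta_Y+D_Y\rfloor$. Along $W$, $v$ becomes a toroidal valuation $v=v_\alpha$ with weights $\alpha=(\alpha_1,\ldots,\alpha_r)\in \bR_{>0}^r$ on the $r$ boundary divisors through $W$. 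The base case $r=1$ is already understood: $v=c\cdot\ord_E$ is divisorial, and running a suitable relative MMP on $Y/X$ extracts only $E$, producing a plt/Koll\'ar-type model $\mu\colon Y'\to X$ with $\Ex(\mu)=E$. The associated graded $\gr_v R$ is then identified with a section ring of an anti-log-canonical divisor on the log Fano $(E,\Delta_E)$, hence finitely generated.

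For the inductive step, assuming the result for all quasi-monomial valuations of rank $<r$, pick a coordinate direction and consider the rank-$1$ subvaluation $v_1:=\ord_{E_1}$ given by one of the $r$ boundary components through $W$. By the base case applied to $v_1$, the ring $\gr_{v_1}R$ is finitely generated and yields a flat degeneration $(X,\Delta)\rightsquigarrow (X_0,\Delta_0):=(\Spec\gr_{v_1}R,\Delta_0)$ via a test-configuration-like family $\cX\to\bA^1$. The key claims to establish are:
\begin{enumerate}[label=(\roman*)]
\item $(X_0,\Delta_0)$ remains klt;
\item the residual valuation $\bar v$ induced by $v$ on $X_0$ has rational rank $r-1$ and remains an lc place of a $\bQ$-complement of $(X_0,\Delta_0)$;
\item there is a canonical isomorphism $\gr_v R\cong \gr_{\bar v}(\gr_{v_1}R)$.
\end{enumerate}
Granting (i)--(iii), the inductive hypothesis applied to $\bar v$ on $(X_0,\Delta_0)$ then yields finite generation of $\gr_{\bar v}(\gr_{v_1}R)\cong \gr_v R$.

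The main obstacle lies in (i) and (ii). Since $v_1$ is only an lc place of the complement, and not itself the minimizer of $\hvol$, its associated degeneration $X_0$ is not a priori klt. The natural strategy is to carry the complement along the family: one expects the flat limit $D_0$ of $D$ to supply a $\bQ$-complement $(X_0,\Delta_0+D_0)$ witnessing (i) and (ii) simultaneously, with $\bar v$ computing log discrepancy zero. Establishing this seems to require running an MMP in family, together with inversion of adjunction on the degeneration and extension-type theorems for lc pairs. A further subtlety is that for some choices of direction $v_1$ the intermediate degeneration may still fail to be klt, so one may have to vary $v_1$ inside the quasi-monomial cone to find a ``good path'' along which klt-ness and the complement condition are preserved at every rational step --- precisely the degeneration strategy announced in the abstract. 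Item (iii), by contrast, should reduce to a combinatorial check on the multi-graded structure of the toroidal filtration cut out by $W$.
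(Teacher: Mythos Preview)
The statement you are attempting to prove is Conjecture~\ref{c-weak}, which the paper does \emph{not} prove; it is presented as the remaining open part of the Stable Degeneration Conjecture. The paper's contribution is the degeneration framework of Theorem~\ref{t-degeneration} and the criterion of Theorem~\ref{t-main2}, together with the remark that what is still missing is a birational condition forcing irreducibility of the central fiber under the K-stability type hypothesis. So there is no ``paper's own proof'' to compare against, and your proposal should be read as an outline of a possible attack rather than a verification of an existing argument.

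That said, your inductive scheme has a structural gap beyond the difficulties you already flag in (i) and (ii). Your inductive hypothesis is the conjecture itself in rank $<r$: finite generation for \emph{minimizers of $\hvol$}. But in (ii) you only arrange that the residual valuation $\bar v$ on $(X_0,\Delta_0)$ is an lc place of a $\bQ$-complement. That is strictly weaker than being a $\hvol$-minimizer, and it is not enough: the paper explicitly recalls (just after Theorem~\ref{t-main2}) that finite generation can fail for lc places of complements, by the counterexample of \cite{AZ-adjunction}. So even if (i)--(iii) all go through, you cannot invoke the inductive hypothesis on $\bar v$ without first showing that $\bar v$ minimizes $\hvol_{X_0,\Delta_0}$. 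Propagating the minimizer property through the one-parameter degeneration is a genuinely new and nontrivial step that your outline does not address; it is essentially the heart of the problem, not a technicality.

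For comparison, the paper's strategy sidesteps the induction on rank entirely. Rather than peeling off one divisorial direction at a time, Theorem~\ref{t-degeneration} degenerates along all $r$ divisorial directions simultaneously to a $\bG_m^r$-equivariant family over $\bA^r$, and Theorem~\ref{t-main2} shows that finite generation of $\gr_vR$ is \emph{equivalent} to irreducibility of the central fiber of this family. This converts the problem into a geometric one about the special fiber, to which one can hope to apply the minimizer hypothesis directly, rather than tracking a chain of intermediate klt degenerations and minimizer conditions.
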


This is known in dimension 2 (see \cite{Cutkosky-twodimensional}*{Proposition 1.2}), but not in general.
We can also similarly ask a global variant of Conjecture \ref{c-weak}.
\begin{conj}\label{c-main}
Let $(X,\Delta)$ be a log Fano variety with $\delta(X,\Delta)\le 1$ and $r$ a positive integer such that $-r(K_X+\Delta)$ is Cartier. Let $R=\bigoplus_{m\in \bN}H^0(-mr(K_X+\Delta))$. If  a valuation $v$ computes $\delta(X,\Delta)$, then ${\rm gr}_vR$ is finitely generated. 
\end{conj}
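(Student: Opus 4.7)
The plan is to reduce finite generation of $\gr_v R$ to a sequence of relative MMPs on a dlt modification of a complement adapted to $v$. First, by the Fano analog of \cite{Xu-quasimonomial}, one expects that any valuation computing $\delta(X,\Delta) \le 1$ is quasi-monomial and arises as an lc place of some $\bQ$-complement: there exists an effective $\bQ$-divisor $\Gamma \sim_\bQ -(K_X+\Delta)$ with $(X,\Delta+\Gamma)$ log canonical and $v$ an lc place. Next, take a dlt modification $\mu: (Y, \Delta_Y + E) \to (X,\Delta+\Gamma)$ whose exceptional divisors include $E_1,\ldots,E_s$ spanning the minimal face of the dual complex containing $v$, so that $v = \sum_{i=1}^s a_i \ord_{E_i}$ with $a_i > 0$. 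This converts the problem from pure valuation theory into the geometry of a concrete log canonical pair.

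The key observation is that finite generation of $\gr_v R$ is implied by finite generation of the multi-graded ring
\[
\bigoplus_{m \in \bN,\ \underline{k} \in \bN^s} \gr^{\underline{k}}_{(\ord_{E_1},\ldots,\ord_{E_s})} R_m,
\]
and that the latter is an intrinsic object, not depending on whether the weights $a_i$ are rational. One then attempts to build the corresponding multi-step degeneration by a sequence of $(K_Y+\Delta_Y+(1-\epsilon)E)$-MMPs over $X$ with suitable scaling. Each such MMP should contract one component of $E$ in the order dictated by $(a_1,\ldots,a_s)$, producing at each stage a central fiber which is again a log Fano pair whose stability threshold is computed by a valuation of strictly lower rank. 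Induction on $s$ then reduces the problem to the rank $1$ case, which is handled by special test configurations via the work of Blum--Liu--Xu and Liu--Xu--Zhuang.

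The main obstacle is controlling this MMP so that the extremal contractions exist in the right direction, the intermediate central fibers remain well-behaved log Fano pairs, and finite generation is preserved at every step. When the weight vector $(a_1,\ldots,a_s)$ is irrational, one additionally needs a density or continuity argument to promote finite generation from rational directions to $v$ itself. Executing this rank-by-rank degeneration --- simultaneously ensuring termination of the MMP, existence of extremal contractions in the prescribed order, and preservation of finite generation through each step --- is where the bulk of the technical work lies, and is precisely the ``degeneration strategy'' alluded to in the abstract.
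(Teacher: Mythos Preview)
The statement is a \emph{conjecture} in the paper; there is no proof to compare against. The paper's contribution is a degeneration framework (Theorems~\ref{t-degeneration} and~\ref{t-main2}) that reformulates the problem, but the conjecture itself is left open. Your proposal is likewise not a proof, as you concede in the final paragraph, so the relevant comparison is between strategies.

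The paper's strategy differs from yours. It constructs the full $\bG_m^r$-equivariant family over $\bA^r$ via the multi-graded extended Rees algebra
\[
\bigoplus_{\ui\in\bZ^r}\mu_*\cO_Y(-i_1E_1-\cdots-i_rE_r)\,t_1^{-i_1}\cdots t_r^{-i_r},
\]
and then proves (Theorem~\ref{t-main2}) that finite generation of $\gr_v R$ is \emph{equivalent} to irreducibility of the central fiber over $0\in\bA^r$. The remaining open problem is to isolate a birational condition, forced by the $\delta$-minimizer hypothesis, guaranteeing that irreducibility. Although the construction of the $\bA^r$-family is itself carried out one coordinate at a time, the paper does not attempt to re-interpret each intermediate fiber as a new instance of the conjecture; the criterion lives entirely at the final central fiber.

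Your inductive rank-reduction via successive MMPs has a genuine gap beyond the MMP-control issues you already flag. The paper explicitly notes (citing \cite{AZ-adjunction}) that finite generation \emph{fails} for general lc places of a $\bQ$-complement, so the $\delta$-minimizer hypothesis must be used essentially, not merely to manufacture the complement $\Gamma$. In your scheme, after one degeneration step you assert that the new central fiber is a log Fano pair whose $\delta$ is computed by a valuation of strictly lower rank, but you give no mechanism for this: nothing in the MMP you describe propagates the $\delta$-minimizing property to the residual valuation on the degenerate fiber. Without that, the inductive hypothesis is unavailable after the first step, and the argument does not recurse.
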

In \cite{BLX-openness}, we have shown under this assumption,  any valuation computing $\delta(X,\Delta)$ is quasi-monomial, and it is always a lc place of a $\bQ$-complement of $(X,\Delta)$.

\medskip

In this note, we discuss these conjectures and establish a degeneration process to investigate the question. In fact, since both these two kinds of minimizers in Conjecture \ref{c-weak} and \ref{c-main} are lc places of a complement, we first study a general degeneration process for a set of divisorial valuations which are lc places of the same $\bQ$-complement. We only state the local one as follows, as the global one can be considered as a special case of the local one via the cone construction.
\begin{thm}\label{t-degeneration}
Let $x\in (X={\rm Spec} (R),\Delta)$ be a klt pair and $\fa$ an $\fm_x$-primary ideal with $c=\lct(X,\Delta;\fa)$. Then for any set of prime divisors $E_1$,..., $E_r$ which are lc places of $(X,\Delta+\fa^c)$, there is a $\bG_m^r$-equivariant locally stable family $(\cX,\Delta_{\cX})\to \bA^r$, such that if we restrict on the $r$ axes, we obtain the induced degenerations of $X$ to $\Spec({\rm gr}_{E_i} R)$ $(i=1,...,r)$ given by the extended Rees algebra.
\end{thm}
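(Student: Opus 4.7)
The plan is to realize $(\cX,\Delta_\cX)$ as $\Spec$ of a multigraded extended Rees algebra associated to the divisorial valuations $v_i=\ord_{E_i}$, built on a common dlt modification $\pi\colon Y\to X$ of $(X,\Delta+\fa^c)$ that extracts all of $E_1,\ldots,E_r$ simultaneously. The existence of such a $\pi$, giving $K_Y+\Gamma=\pi^*(K_X+\Delta+\fa^c)$ with $(Y,\Gamma)$ dlt and $\sum E_i\subset\lfloor\Gamma\rfloor$, follows because all $E_i$ are lc places of the common lc pair $(X,\Delta+\fa^c)$. I would then form the multigraded filtration $\cF^{\vec m}R=\{f\in R:\ord_{E_i}(f)\ge m_i\;\forall\,i\}$ (with $\cF^{\vec m}R=R$ for $\vec m\le 0$) and the algebra
$$\cR=\bigoplus_{\vec m\in\bZ^r}\cF^{\vec m}R\cdot s^{\vec m}\subset R[s_1^{\pm 1},\ldots,s_r^{\pm 1}],$$
setting $\cX=\Spec\cR$ with its natural $\bG_m^r$-grading and the map $\cX\to\bA^r$ induced by $k[s_1^{-1},\ldots,s_r^{-1}]\hookrightarrow\cR$; define $\Delta_\cX$ as the flat closure of $\Delta\times(\bG_m)^r$ inside $\cX$.

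The decisive technical step is the finite generation of $\cR$. My plan is to pull the construction up to $Y$ so that $\cR$ is expressed via $\pi_*\cO_Y(-\sum m_iE_i)$, and then run a $(K_Y+\Gamma-\epsilon\sum E_i)$-MMP with scaling, together with BCHM-style finite generation for the output model, to produce a birational model on which the multigraded linear system $\{-\sum m_iE_i\}_{\vec m\ge 0}$ becomes semiample, which suffices. Granted finite generation, equivariance of $\cX\to\bA^r$ is automatic from the multigrading, flatness follows from the usual Rees algebra argument over $k[s_1^{-1},\ldots,s_r^{-1}]$, and the generic fiber is $(X,\Delta)$ because inverting all $s_i^{-1}$ in $\cR$ recovers $R[s_1^{\pm 1},\ldots,s_r^{\pm 1}]$. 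Local stability is then obtained by transferring the lc structure of $(Y,\Gamma)$ along the Rees construction, checked fiber-by-fiber via inversion of adjunction. Finally, quotienting $\cR$ by the ideal $(s_j^{-1})_{j\neq i}$ recovers the ordinary extended Rees algebra of $\ord_{E_i}$, whose $s_i^{-1}=0$ fiber is $\Spec\gr_{E_i}R$, yielding the axes statement.

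The hardest part will be finite generation. Even for $r=1$ it is already a nontrivial MMP statement — a single divisorial lc place of a $\bQ$-complement has to admit a suitable Koll\'ar-type model — and for $r>1$ one must simultaneously coordinate MMP outputs in several independent directions. The hypothesis that all $E_i$ are lc places of the \emph{same} $\bQ$-complement is precisely what makes this possible: it puts them in a common dlt polyhedron on $(Y,\Gamma)$, so that the MMP outputs in different coordinate directions glue into a single multigraded finitely generated algebra rather than producing incompatible birational modifications. A secondary difficulty is checking local stability over \emph{all} of $\bA^r$ (not only the axes or the origin), which should follow from a family version of inversion of adjunction applied to the dlt structure of $(Y,\Gamma)$.
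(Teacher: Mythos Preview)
Your target object is correct: the paper also arrives at the multigraded extended Rees algebra
\[
\cR=\bigoplus_{\ui\in\bZ^r}\mu_*\cO_Y\bigl(-\textstyle\sum_k i_kE_k\bigr)\,t_1^{-i_1}\cdots t_r^{-i_r}
\]
(Proposition~\ref{p-totalring}), and the axes description follows exactly as you say. But the route the paper takes is quite different from yours, and the difference matters for the two points you flag as hardest.

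The paper does \emph{not} attack finite generation or local stability directly on the $r$-variable object. Instead it proceeds by induction on $r$ (Theorem~\ref{t-equallc}): given a locally stable family $(\cX_k,\Delta_{\cX_k})\to\bA^k$ together with a flat ideal $\tilde\fa_k$ whose fiberwise lct is constantly $c$, one extracts the single divisor $\cE_{k+1}$ (the birational transform of $E_{k+1}\times\bA^k$) via \cite{BCHM}*{1.3.1} and applies the rank-one extended Rees construction relative to $\bA^k$. Finite generation is then automatic at each step because $-\cE_{k+1}$ is $\mu$-ample on a single plt extraction; no multigraded MMP or Mori-dream-space argument is needed. Your direct MMP sketch (``run a $(K_Y+\Gamma-\epsilon\sum E_i)$-MMP to make the multigraded system semiample'') is not quite right as stated: one MMP run contracts a fixed class, not an entire cone, so you would really need a Fano-type/Mori-dream-space statement over $X$ to get the full multi-Rees ring. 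That is available in this klt setting, but it is a heavier hammer than what the paper uses.

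The more serious gap is local stability. The inductive engine in the paper is Lemma~\ref{l-equallct}: after degenerating along one $E_i$, the lct of the degenerated ideal $\bin(\fa)$ on every fiber remains exactly $c$. This is proved by recognising the special fiber as an orbifold cone over $(E_s,\Delta_{E_s})$ and applying inversion of adjunction (Proposition~\ref{p-IOA}) through that cone structure; it is what guarantees that the next $E_{k+1}$ is still an lc place over the whole base, so the next extraction exists fiberwise and the new family is again locally stable (Lemma~\ref{l-degslc}). Your proposal to ``transfer the lc structure of $(Y,\Gamma)$ along the Rees construction'' and invoke a ``family version of inversion of adjunction'' does not supply a substitute for this step: over a general point of a coordinate subspace of $\bA^r$ the fiber is an iterated associated-graded object (Corollary~\ref{c-deg}), and checking it is slc directly from the dlt model $(Y,\Gamma)$ is exactly what the lct-constancy lemma is designed to avoid. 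Without an analogue of Lemma~\ref{l-equallct}, your local-stability check is incomplete.
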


In Section \ref{ss-degeneration}, we will provide  more detailed information for our construction. 

In  \cite[Problem 3.1]{AIM-conjecture}, a stronger conjecture was posted to predict the finite generation of $\gr_v(R)$ for any lc place $v$ of $(X,\Delta+\fa^c)$. However, a counterexample was recently found (see \cite[Theorem 1.4]{AZ-adjunction}).   Nevertheless, we establish the following criterion.

\begin{thm}\label{t-main2}
Notation as above. For a rational rank $r$  valuation $v$ which is an lc place of $(X,\Delta+\fa^c)$, the associated graded ring $\gr_v R$ is finitely generated if and only if we can find $r$ prime divisors $E_1,..., E_r$ which are also lc places of $(X,\Delta+\fa^c)$, such that
\begin{enumerate}
\item there is a model $Y\to X$ extracting $E_1,..., E_r$, and $v$ is a toroidal divisor over a point $\eta\in (Y, E_1+\cdots +E_r)$;
\item the degeneration constructed in Theorem \ref{t-degeneration} for $E_1,...,E_r$ has an irreducible fiber over $0\in \bA^r$.
\end{enumerate}
\end{thm}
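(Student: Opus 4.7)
The plan is to prove both directions using an identification of $\gr_v R$ with the multi-Rees associated graded $S := \gr_{(\ord_{E_1},\ldots,\ord_{E_r})}R$ whenever the latter is an integral domain. The $(\Leftarrow)$ direction applies this identification directly; the $(\Rightarrow)$ direction uses the structure of lc places of the complement together with FG of $\gr_v R$ to produce appropriate $E_i$.

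\textbf{$(\Leftarrow)$.} Suppose $E_1,\ldots,E_r$ satisfy (1) and (2). Theorem~\ref{t-degeneration} supplies the family $\cX\to\bA^r$ with central fiber $\cX_0 = \Spec S$; combined with (2) and reducedness from local stability, $S$ is an integral domain with its natural $\bZ^r$-grading. By (1), write $v = \sum w_i\ord_{E_i}$ toroidally at $\eta$ with $\bQ$-linearly independent weights $w_i$. The $\bG_m^r$-action on $\Spec S$ and the weight vector $w$ induce a valuation $\bar v$ on $S$ with $\bar v|_{S_{\vec n}} = \sum w_i n_i$; by $\bQ$-independence, $\bar v$ separates distinct multi-degrees, so $\gr_{\bar v}S = S$ as $\bR$-graded rings. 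The key technical step is a compatibility lemma identifying $\gr_v R \cong \gr_{\bar v}S$, valid precisely because $S$ is a domain. Combining, $\gr_v R \cong S$, which is finitely generated as the coordinate ring of a fiber of the finite-type family $\cX$.

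\textbf{$(\Rightarrow)$.} Assume $\gr_v R$ is finitely generated. By \cite{BLX-openness} and \cite{Xu-quasimonomial}, $v$ lies in the relative interior of a face of the dual complex of some $\bQ$-complement of $(X,\Delta+\fa^c)$, whose vertices are divisorial lc places. Choose $E_1,\ldots,E_r$ as these vertices; they are extracted on a log smooth model, and after suitable contractions via MMP-based extraction techniques (as in \cite{BCHM}) we obtain a model $Y\to X$ extracting exactly the $E_i$, with $v$ toroidal at $\eta = E_1\cap\cdots\cap E_r$, giving (1). For (2), apply Theorem~\ref{t-degeneration} to these $E_i$ to produce $\cX\to\bA^r$ with fiber $\Spec S$. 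FG of $\gr_v R$, combined with the reverse direction of the compatibility lemma, forces $S$ to be a domain: otherwise the $v$-filtration on $R$ would fail to match the lifted $\bar v$-filtration, and the extra relations in $S$ would obstruct finite generation of $\gr_v R$. Hence $\cX_0$ is irreducible.

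\textbf{Main obstacle.} The central technical ingredient is the compatibility lemma $\gr_v R \cong \gr_{\bar v}S$. It asserts that when $S$ is a domain, the $v$-filtration on $R$ coincides with the $\bar v$-filtration obtained by lifting along the initial-form map $R\dashrightarrow S$. Integrality of $S$ is precisely the condition making the two filtrations agree; when $S$ has zerodivisors, elements can acquire ``lower'' $v$-leading terms that collapse in the multi-Rees, so the lemma fails. The hardest aspect is making the $(\Rightarrow)$ direction effective: ensuring the $E_i$ extracted from the dual complex actually yield irreducible $\cX_0$ likely requires refining the log smooth model $Y$ guided by the $\bG_m^r$-equivariant geometry of $\Spec\gr_v R$, so that the vertices of the face of the dual complex capture exactly the $r$-dimensional torus action on the degeneration.
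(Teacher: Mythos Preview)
Your $(\Leftarrow)$ direction is essentially correct and matches the paper's approach. The paper's Lemma~\ref{l-d=w} is exactly your ``compatibility lemma'': when $X_0$ is irreducible (so $R_*$ is a domain), the weight filtration $\wt_{\xi_w}$ coincides with the quasi-monomial valuation $v_w$, hence $\gr_v R = R_*$, which is finitely generated as a fiber of $\cX_r\to\bA^r$. Your formulation via $\gr_{\bar v}S$ is a slight repackaging of the same argument.

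Your $(\Rightarrow)$ direction has a genuine gap, which you yourself flag in the ``Main obstacle'' paragraph. The claim that finite generation of $\gr_v R$ \emph{forces} $S$ to be a domain, for $E_1,\ldots,E_r$ chosen as arbitrary vertices of a face of the dual complex, is not justified. Your heuristic (``extra relations in $S$ would obstruct finite generation of $\gr_v R$'') does not hold: there is no a priori relation between $\gr_v R$ and $S$ unless $S$ is already a domain, so you cannot deduce integrality of $S$ from properties of $\gr_v R$ this way. For a generic choice of $E_i$ spanning a simplex containing $v$, the central fiber may well be reducible even when $\gr_v R$ is finitely generated.

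The paper handles this direction by a black-box citation to \cite{LX-SDCII}*{Lemma 2.10}, which does the real work: given that $\gr_v R$ is finitely generated, that lemma \emph{constructs} divisors $E_1,\ldots,E_r$ (lying in the minimal rational cone of dimension $r$ containing $v$) for which the multi-graded ring $R_*$ is isomorphic to $\gr_v R$ itself, hence a domain. The point is that the $E_i$ are not chosen first and then shown to work; rather, the finite generation of $\gr_v R$ is used to produce the correct $E_i$. Your proposal reverses this logic and thereby loses the argument. To fix your proof you would need to supply the content of \cite{LX-SDCII}*{Lemma 2.10}: show that for $w'$ rational and sufficiently close to $w$ in the cone of lc places, the divisorial valuation $v_{w'}$ has the same associated graded ring as $v$, and then take $E_1,\ldots,E_r$ to be $r$ such nearby divisorial valuations spanning a simplex around $v$.
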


\begin{rem}To proceed, it will be crucial to further pin down a birational geometry condition to guarantee the irreducibility assumption in Theorem \ref{t-main2} and cover the K-stability type assumption in Conjecture \ref{c-weak} and \ref{c-main}.
\end{rem}
\medskip

\noindent {\bf Acknowledgement:} We would like to thank Harold Blum, Yuchen Liu, Mircea Musta\c{t}\v{a} and Ziquan Zhuang for helpful discussions. We are also grateful to the anonymous referee for many useful suggestions.

\section{Preliminaries}

\noindent{\bf Notation and Conventions:} We follow the terminology in \cite{KollarMori, Kollar13, Laz-positivityII}. See \cite[Def. 2.34]{KollarMori} for the definitions of lc, klt and dlt singularities for a log pair $(X,\Delta)$. Also see \cite[Def. 35]{dFKX-dualcomplex} for the definition qdlt singularities. See \cite[Sect. 5.2]{Kollar13} for the definition of {\it semi-log canonical (slc)} singularities. 

Let $X$ be a reduced, irreducible (separated) variety defined over $k$. Denote by $\Val_X$ 
the set of real valuations of $K(X)$ that admit a center
on $X$. For a closed point $x\in X$, we denote by $\Val_{X,x}$ the set
of real valuations of $K(X)$ centered at $x\in X$. 

Let $X$ be a normal variety, we say that $X$ is {\it potentially klt} if there is an effective $\mathbb Q$-divisor $\Delta$ such that $(X,\Delta)$ is klt.

\subsection{Grading and $T$-action}



 


\subsubsection{Graded sequence of ideals and valuations}

Let $R$ be a ring. Let $\Phi$ be a monoid, and we call $\{\fa_{i}\}_{i\in \Phi}\subset R$ {\it a graded sequence of ideals}, if $\fa_i\cdot \fa_j\subset \fa_{i+j}$ for any $i,j\in \Phi$. We always assume $\fa_0=R$.  Moreover, if the monoid $\Phi$ is partially ordered, then we always assume $\fa_i\supset \fa_j$ for any given pair $i\le j$.

If there is an embedding $\Phi\subset \bR_{\ge 0}$ as monoids, for any $i\in \Phi$, we define $\fa_{>i}=\cup_{j>i}\fa_j$. We can form {\it the associated graded ring} 
$${\rm gr}(R,\fa_{\bullet}):= \bigoplus_{i\in \Phi}\fa_i/\fa_{>i}.$$ 

Assume $R$ is an integral domain and denote by $X=\Spec R$,  we let $\Val_X$ be the space of all real valuations of $K(X)$ whose center is on $X$. Assume $v\in  \Val_{X}$, let $\Phi$ be the value monoid of $v$, i.e. $\Phi=\{v(f) |~ f\in R\}$. For any $t\in \bR$, we can consider the (possibly trivial) ideal $\fa^v_{t}:=\{f\in R | ~ v(f)\ge t\}$.  

The following easy lemma says when a graded sequence of ideals arises from a valuation. 

\begin{lem}\label{l-filtrationrestriction}
When $R$ is an integral domain, and $\fa_{\bullet}$ is graded by a monoid in $\bR_{\ge 0}$, then ${\rm gr}(R,\fa_{\bullet})$ is an integral domain if and only if $\fa_{\bullet}$ is the graded sequence of valuative ideals for some real valuation $v$. 
\end{lem}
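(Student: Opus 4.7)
The lemma states an equivalence, and my plan is to prove the two directions separately.

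For the forward ($\Leftarrow$) direction, suppose $\fa_\bullet = \fa^v_\bullet$ for some real valuation $v$. Given nonzero classes $\bar f \in \fa_s/\fa_{>s}$ and $\bar g \in \fa_t/\fa_{>t}$, one has $v(f) = s$ and $v(g) = t$ by construction, so multiplicativity of $v$ gives $v(fg) = s + t$, placing $fg$ in $\fa_{s+t} \setminus \fa_{>s+t}$. Hence $\bar f \cdot \bar g = \overline{fg}$ is a nonzero element of the degree-$(s+t)$ piece, and since $\gr(R, \fa_\bullet)$ is $\Phi$-graded, this shows it is an integral domain.

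For the converse ($\Rightarrow$), assume $\gr(R, \fa_\bullet)$ is a domain. The candidate valuation is built by setting, for nonzero $f \in R$,
\[
v(f) := \sup\{ t \in \Phi : f \in \fa_t\},
\]
and extending to $K(R)$ via $v(f/g) := v(f) - v(g)$. The crux of the verification is multiplicativity: if $v(f) = s$ and $v(g) = t$ are attained, then $\bar f$ and $\bar g$ are, by construction, nonzero in the respective graded pieces, so the domain hypothesis yields $\overline{fg} = \bar f \cdot \bar g \neq 0$ in degree $s+t$, which is exactly the statement $fg \in \fa_{s+t} \setminus \fa_{>s+t}$, i.e., $v(fg) = s + t$. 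Subadditivity $v(f+g) \geq \min(v(f), v(g))$ is automatic from the fact that each $\fa_t$ is an ideal, multiplicativity yields well-definedness of the extension to $K(R)$, and the identification $\fa_t = \fa^v_t$ follows from the definition of $v$ together with the monotonicity $\fa_s \subseteq \fa_t$ for $s \geq t$.

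The main technical obstacle is guaranteeing that the supremum defining $v(f)$ is both finite and attained. Finiteness fails precisely when $f \in \bigcap_{t \in \Phi} \fa_t$, so real-valuedness of $v$ requires the graded sequence to be separated; this is automatic in the forward direction (as $v(f) < \infty$ for nonzero $f$) and is implicit in what it means for $\fa_\bullet$ to be \emph{valuative}. Attainment of the supremum can fail at accumulation points of $\Phi$, and the standard workaround is to replace $\fa_\bullet$ by its $\bR_{\geq 0}$-indexed saturation $\fa_t := \bigcup_{s \in \Phi,\, s \geq t} \fa_s$, which shares the same associated graded ring but restores attainment. Once these technicalities are handled, the valuation $v$ is uniquely determined by $\fa_\bullet$ and the two constructions are mutually inverse.
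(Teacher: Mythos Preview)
Your argument is correct and follows the standard route; the paper does not spell out a proof at all but simply refers to \cite[Page 8]{Tei-valuation}, where essentially this construction (define $v(f)$ as the largest $t$ with $f\in\fa_t$ and use integrality of the graded ring to get multiplicativity) appears. Your explicit discussion of the separatedness and sup-attainment hypotheses is a welcome clarification of technicalities that the paper's bare citation leaves implicit.
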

\begin{proof}See \cite[Page 8]{Tei-valuation}.
\end{proof}

For any $r$-tuples in $\bR^r$, we say $\ui=(i_1,...,i_r)\ge \underline{j}=(j_1,...,j_r)$ if $i_k\ge j_k$ for any $1\le k \le r$ and $(i_1,...,i_r)>(j_1,...,j_r)$ if $i_k\ge j_k$ for any $k$ and at least one inequality is strict. For a graded  sequence of ideals $(\fa_{\ui})_{\ui\in \bZ^r_{\ge 0}}$ graded by the monoid $\mathbb{Z}^r_{\ge 0}$,  we can define the associated graded ring 
$${\rm gr}(R,\fa_{\bullet}):=\bigoplus_{\ui\in \bZ^r_{\ge 0}}\fa_{\ui}/\fa_{>\ui}.$$

\subsubsection{Extended Rees algebra}

For a filtration $\{\fa_{\bullet}\}_{\bZ^r_{\ge 0}}$, we can construct an extended Rees $k[t_1,...,t_r]$-algebra $\cR$.  We will be mainly interested in the case that the associated graded ring ${\rm gr}(R,\fa_{\bullet})$ is finitely generated. The grading on ${\rm gr}(R,\fa_{\bullet})$ automatically gives a $T=(\bG_m)^r$-action on it. 

\begin{exmp}[Extended Rees algebra in rank one]\label{e-era}Let $(\fa_{i})_{i\in \bZ_{\ge 0}}$ be a graded sequence of ideals of a finitely generated $k$-algebra $R$ with $\fa_0=R$. Then $\bigoplus_{i\in \bZ_{\ge 0}}~\fa_i$ is finitely generated if and only if the associated graded ring
$$\bigoplus_{i\in \bZ_{\ge 0}}\fa_i/\fa_{i+1} \mbox{ is finitely generated}.$$
In this case, we can form the following {\it extended Rees ($k[t]$-)algebra} $\cR:=\bigoplus_{i\in \bZ} \fa_it^{-i}$ where we assume $\fa_i=R$ for $i\le 0$. Then the $k[t]$-algebra structure together with the grading means there is a  $\bG_m$-equivariant morphism $\Spec\cR\to \bA^1$, where 
$$\Spec\cR\times_{\bA^1}(\bA^1\setminus \{0\})=(X:={\rm Spec}(R))\times (\bA^1\setminus\{0\})$$ and the special fiber over $0$ is $\Spec \bigoplus_{i\in \bZ_{\ge 0}}\fa_i/\fa_{i+1} $.
\end{exmp}

The $T$-families in our note come from the following construction, which is a generalization of Example \ref{e-era} from a point to a more general base.

\begin{construction}\label{extendedrees}
Let $(X_B={\rm Spec}~(R))\to (B={\rm Spec}~(A))$ be a dominating morphism between two affine varieties. Let $\{\fa_{\bullet}\}_{k\in  \bN}$ be a graded sequence of ideals of $R$, with $\fa_0=R$ and denote by $\fa_i=R$ for $i\le 0$. Assume  $\fa_i\cap A=0$ for any $i>0$.  

Assume  $\bigoplus_i \fa_i$ is finitely generated over $A$. Then we can consider {\it the relative extended Rees ($k[t]$-)algebra}
\begin{center}
\begin{tikzcd}[column sep=scriptsize, row sep=scriptsize]
 A[t]=\bigoplus_{i\le 0} A\cdot t^{-i}  \arrow[d] & \subset  &  A\otimes_k k[t,\frac{1}{t}] \arrow[d]\\
\cR:=  \bigoplus_{i\in \bZ}\fa_i\cdot t^{-i} &\subset  & R\otimes_k k[t,\frac{1}{t}]
   \end{tikzcd}
   \end{center}
Geometrically, we have $\cX:=\Spec   \cR \to  \bA^1_A.$

Now we assume $R/\fa_i$ is \emph{flat} over $A$ for any $i$, then the above construction commutes with base change in the following sense: For 
$$B'=\Spec A'\to B \mbox{ given by \ \ }  A\to A',$$ let $\fb_i:=\fa_i\otimes_A  A'$. Then since $R/\fa_i$ is flat, $(\fb_i)_{i\in \bZ_{\ge 0}}$ is a graded sequence of ideals satisfying the conditions at the beginning.  
Moreover, if $\bigoplus_i \fa_i$ is finitely generated over $A$, then  $\bigoplus_i \fb_i$ is finitely generated over $A'$. Base changing over $B'$, we get 
$$\bigoplus_{i\in \bZ} \fb_i\cdot t^{-i}\subset R\otimes_A A'\otimes k[t,\frac{1}{t}],$$
and geometrically, we have a base change morphism
$$\cX\times_B B'=\Spec \bigoplus_{i\in \bZ} \fb_i\cdot t^{-i}\to \bA^1_{A'}.$$ 
\end{construction}

\begin{lem}\label{l-initial}With the same notation as in Construction \ref{extendedrees}, 
assume $R$ is flat over $A$. Then the above family $\Spec (\cR)\to \bA_A^1$ is flat.

\end{lem}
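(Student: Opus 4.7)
The plan is to exploit the $\bZ$-grading on $\cR$ (with $t$ in degree $1$) and reduce flatness of $\cR$ over $A[t]$ to flatness of the central fibre $\cR/t\cR$ over $A$ via a local flatness criterion. The key input from Construction \ref{extendedrees} is the flatness of each $R/\fa_i$ over $A$.

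First, I would establish $A$-flatness of the central fibre. From the short exact sequence $0 \to \fa_i \to R \to R/\fa_i \to 0$ and the $A$-flatness of $R$ and $R/\fa_i$, the Tor long exact sequence shows that each $\fa_i$ is $A$-flat. Applying the same reasoning to $0 \to \fa_i/\fa_{i+1} \to R/\fa_{i+1} \to R/\fa_i \to 0$ yields $A$-flatness of each successive quotient $\fa_i/\fa_{i+1}$. Hence $\cR/t\cR = \bigoplus_i \fa_i/\fa_{i+1} = \gr R$, as a direct sum of $A$-flat modules, is flat over $A$.

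Next I would record two structural observations about $\cR$ as an $A[t]$-module. Since $\cR$ embeds into $R \otimes_k k[t,t^{-1}]$, in which $t$ is a unit, $t$ is a non-zero-divisor on $\cR$, and $\cR[t^{-1}] = R \otimes_A A[t,t^{-1}]$ is flat over $A[t,t^{-1}]$ by base change from $A \to R$. To conclude flatness of $\cR$ over $A[t]$, verify it locally at each prime $\mathfrak{p} \subset A[t]$: when $t \notin \mathfrak{p}$, the identification above localizes to give flatness; when $t \in \mathfrak{p}$, apply the local criterion of flatness in the Noetherian local ring $A[t]_\mathfrak{p}$, using the element $t$ in its maximal ideal, the non-zero-divisor property of $t$ on $\cR_\mathfrak{p}$, and the flatness of $\cR_\mathfrak{p}/t\cR_\mathfrak{p} = (\gr R)_{\mathfrak{p} \cap A}$ over $A[t]_\mathfrak{p}/(t) = A_{\mathfrak{p} \cap A}$.

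The technical subtlety I expect to be most delicate is the precise form of the local criterion, since $\cR_\mathfrak{p}$ is finitely generated as an $A[t]_\mathfrak{p}$-algebra but typically not as a module. I would handle this either by appealing to the ideal-separated version of the local criterion (whose hypothesis follows from the graded structure of $\cR$, which controls the behaviour of intersections $\bigcap_n t^n \cR$ in each fixed degree) or by writing $\cR$ as a filtered colimit of graded $A[t]$-submodules for which the standard local criterion applies directly, and then passing to the colimit using preservation of flatness under filtered colimits.
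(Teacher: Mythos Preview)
Your argument is essentially correct and takes a genuinely different route from the paper. The paper avoids the local flatness criterion entirely: it writes $\cR=\bigcup_k \cR_k$ with $\cR_k=\bigoplus_{j\le k}\fa_j t^{-j}$, filters each $\cR_k$ by submodules $\fc_{i,k}$ whose successive quotients are isomorphic as $A[t]$-modules to $(\fa_i/\fa_{i+1})[t]$ (flat over $A[t]$ by base change from the $A$-flat module $\fa_i/\fa_{i+1}$), and concludes that each $\cR_k$, hence the directed union $\cR$, is flat. This is pure d\'evissage, with no local criterion. Your approach trades that explicit filtration for a cleaner conceptual picture (non-zero-divisor plus flat special fibre), at the cost of the finiteness issue you correctly flag.

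That issue has a simpler resolution than either of your suggested fixes. Rather than localizing at primes of $A[t]$, localize at primes $\mathfrak{q}$ of $\cR$. Since $\bigoplus_i\fa_i$ is assumed finitely generated over $A$ in Construction~\ref{extendedrees}, $\cR$ is a finitely generated $A[t]$-algebra, so $A[t]_{\mathfrak{q}\cap A[t]}\to \cR_{\mathfrak{q}}$ is a local homomorphism of Noetherian local rings and the standard local criterion applies with $M=\cR_{\mathfrak{q}}$ (finite over itself). When $t\notin\mathfrak{q}$ you are inside $\cR[t^{-1}]=R\otimes_A A[t,t^{-1}]$; when $t\in\mathfrak{q}$, the element $t$ is regular on $\cR_{\mathfrak{q}}$ and $\cR_{\mathfrak{q}}/t\cR_{\mathfrak{q}}$ is a localization of $\gr R$, hence flat over $A_{\mathfrak{q}\cap A}$. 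This sidesteps the ideal-separated hypothesis (awkward to verify once localization destroys the grading) and the filtered-colimit workaround, which in any case leads back toward the paper's $\cR_k$.
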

\begin{proof} For any $k\in \mathbb N$, let $\cR_k=\bigoplus_{j\le k}\fa_jt^{-j}$. It suffices to show that $\cR_k$ is a flat $A[t]$-module for any $k$, as $ \cR=\bigcup_k \cR_k$. 
Define the ideal $\fc_i$ of $\cR$ by
$$\fc_i=(\bigoplus^{\infty}_{j=i+1}\fa_{j}t^{-j}) \bigoplus (\bigoplus^{i}_{j=-\infty}\fa_{i}\cdot t^{-j})$$  and    $\fc_{i,k}=\fc_i\cap \cR_k$.
For $0\le i\le k-1$, there is an exact sequence of $A[t]$-modules
$$0\to \bigoplus^{-\infty}_{j= i}(\fa_{i}/\fa_{i+1})t^{-j}\to \cR_k/\fc_{i+1,k}\to \cR_k/\fc_{i,k}\to 0.$$
As an $A[t]$-module, $\bigoplus^{-\infty}_{j= i}(\fa_{i}/\fa_{i+1})t^{-j}$ is isomorphic to $(\fa_{i}/\fa_{i+1})[t]$ which is flat over $A[t]$ as $\fa_{i}/\fa_{i+1}$ is flat over $A$.
By induction for any $i\ge 0$,  $\cR_k/\fc_{i,k}$ is flat over $A[t]$ as $\fc_{0,k}=\cR_k$. 
Since $R$ and $R/\fa_{i}$ are flat over $A$, we know $\fa_i$ is flat over $A$. Since $\fc_{k,k}\cong \fa_k[t]$ as $A[t]$-modules, $\fc_{k,k}$ is flat over $A[t]$, which implies that $\cR$ is flat over $A[t]$.
\end{proof}

\begin{defn}\label{d-degI}In the above setting, let $I\subset R$ be an ideal, then we define the ideal 
$$\tilde{I}:=\bigoplus_{i\in\bZ}(I\cap \fa_i)~t^{-i}\subset \cR$$ to be the flat family of ideals which degenerates $I$, i.e., the vanishing locus of $V(\tilde{I})$ is the closure of $V(I)\times (\bA^{1}\setminus\{0\})$ in $\cX$. 

In particular, for a family of $\bQ$-divisors $\Delta_B$ on $X_B$ over $B$ which can be written as $\sum_i d_i\Delta_i$ where $\Delta_i$ is an irreducible divisor on $X_B$ given by an ideal $I_i$, then for each $i$, $\tilde{I}_i$ given above defines the degeneration family $\widetilde{\Delta}_i$ of $\Delta_i$ over $B\times \bA^1$. We define $\Delta_{\cX}=\sum_i d_i\widetilde{\Delta}_i$.
\end{defn}

\begin{lem}\label{l-extendprimary}
Let $x\in X=\Spec {R}$, and  a graded sequence $\{\fa_i\}_{i\in \bZ_{\ge 0}}$ of $\fm_x$-primary ideals. Assume $\bigoplus_{i\in \bZ_{\ge 0}}\fa_i$ is finitely generated and denote by $\cR=\bigoplus_{i\in \bZ}\fa_it^{-i}$. Let $\fb$ be an $\fm_x$-primary ideal. Then $\bigoplus_{i\in \bZ} (\fb\cap\fa_i)t^{-i}$ satisfies that ${\rm Cosupp}(\bigoplus_{i\in \bZ} (\fb\cap\fa_i)t^{-i})$ is the unique flat extension over $\mathbb{A}^1$ of ${\rm Cosupp}(\fb)\times ( \bA^1\setminus\{0\})$ on $\cX:=\Spec\cR$.
\end{lem}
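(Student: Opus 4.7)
The plan is to identify $\tilde{\fb}:=\bigoplus_{i\in\bZ}(\fb\cap\fa_i)t^{-i}$ with the defining ideal of the scheme-theoretic closure in $\cX$ of $V(\fb)\times(\bA^1\setminus\{0\})$, and then to upgrade this to a flat extension over $\bA^1$ by checking $t$-torsion freeness of the quotient. Uniqueness will then follow from the minimality of the scheme-theoretic closure combined with flatness of any candidate extension.

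First I would compute the scheme-theoretic closure explicitly. Away from $t=0$, we have $\cX\times_{\bA^1}(\bA^1\setminus\{0\})=X\times(\bA^1\setminus\{0\})$, corresponding to the localization $\cR[t^{-1}]=R\otimes_k k[t,t^{-1}]$, and under this identification the subscheme $V(\fb)\times(\bA^1\setminus\{0\})$ is cut out by $\fb\cdot k[t,t^{-1}]$. The ideal of the scheme-theoretic closure is then the contraction $(\fb\cdot k[t,t^{-1}])\cap\cR$. Writing a general element of $\cR$ as a Laurent polynomial $\sum_i c_it^{-i}$ with $c_i\in\fa_i$, such an element lies in $\fb\cdot k[t,t^{-1}]$ exactly when each $c_i\in\fb$; hence the contraction equals $\bigoplus_i(\fa_i\cap\fb)t^{-i}=\tilde{\fb}$.

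Next I would verify flatness of $\cR/\tilde{\fb}$ over $k[t]$. Since $t$ is homogeneous of degree $-1$ and $\tilde{\fb}$ is a graded ideal, flatness over $k[t]$ is equivalent to $t$-torsion freeness of $\cR/\tilde{\fb}$, which can be checked on homogeneous elements. Take $at^{-i}$ with $a\in\fa_i$ and suppose $t^k\cdot at^{-i}=at^{-(i-k)}$ lies in $\tilde{\fb}$ for some $k\ge 1$. This means $a\in\fa_{i-k}\cap\fb$; but $\fa_i\subset\fa_{i-k}$ by the decreasing nature of the filtration, so in fact $a\in\fa_i\cap\fb$, i.e.\ $at^{-i}\in\tilde{\fb}$ already. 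Thus $\cR/\tilde{\fb}$ is $t$-torsion free, and flatness over $\bA^1$ follows.

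For uniqueness, let $Z\subset\cX$ be any closed subscheme flat over $\bA^1$ whose restriction to $\bA^1\setminus\{0\}$ equals $V(\fb)\times(\bA^1\setminus\{0\})$. By minimality of the scheme-theoretic closure established in the first step, the defining ideal $I_Z$ is contained in $\tilde{\fb}$. The short exact sequence exhibits $\tilde{\fb}/I_Z$ as a submodule of the flat (hence $t$-torsion free) $k[t]$-module $\cR/I_Z$, while $\tilde{\fb}/I_Z$ itself is supported on $\{t=0\}$ and so is entirely $t$-torsion; thus $\tilde{\fb}/I_Z=0$, proving $I_Z=\tilde{\fb}$. The only mildly delicate point is the $t$-torsion freeness check in the second step, and it is precisely there that one uses that $\fa_\bullet$ is a decreasing filtration; the $\fm_x$-primary hypothesis plays no role beyond ensuring the cosupports under consideration are concentrated near $x$ so that the flat-extension statement has its intended geometric content.
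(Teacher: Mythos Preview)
Your proof is correct. The flatness step is essentially the computation the paper defers to \cite[Lem.~4.1]{LX-SDCI}, so there is no real difference there. For uniqueness, however, the paper argues differently: since $\fb$ is $\fm_x$-primary, $R/\fb$ has finite length $\ell:=\dim_k(R/\fb)$, so the family ${\rm Cosupp}(\fb)\times(\bA^1\setminus\{0\})$ defines a morphism from $\bA^1\setminus\{0\}$ into the Hilbert scheme of length-$\ell$ subschemes of $\cX/\bA^1$, and this Hilbert scheme is proper over $\bA^1$; the unique flat extension is then the valuative-criterion limit. Your argument instead observes directly that any flat extension $Z$ has $I_Z\subset\tilde{\fb}$ by the scheme-theoretic-closure characterization, and that $\tilde{\fb}/I_Z$ is simultaneously $t$-torsion (supported over $0$) and $t$-torsion free (a submodule of the flat $\cR/I_Z$), hence zero. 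This is more elementary, avoids Hilbert scheme machinery, and as you note does not actually use the $\fm_x$-primary hypothesis; the paper's route is shorter to state but genuinely relies on finite length to invoke properness.
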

\begin{proof}
It is easy to check $\bigoplus_{i\in \bZ} (\fb\cap\fa_i)t^{-i}$ satisfies that ${\rm Cosupp}(\bigoplus_{i\in \bZ} (\fb\cap\fa_i)t^{-i})$ is flat (see e.g. \cite[Lem 4.1]{LX-SDCI}). 
Such an extension is unique because the Hilbert scheme (of length $\dim_k(R/\fb)$) is proper over $\mathbb{A}^1$. Thus we get the above statement.
\end{proof}

\begin{exmp}\label{em-ambient}
Consider the $(\mathbb{G}_m)^r$-action on $\bA^N$ given by the following:
$$(\mathbb{G}_m)^r=\bG_m\times \cdots \bG_m, \mbox{ and the $k$-th $\bG_m$-component acts by the weight }(w_{k,1},..., w_{k,N})\in \bN^r.$$
Denote by $P:=k[x_1,....,x_N]$. Then define the ring 
$$\cP:=\bigoplus_{\ui\in \mathbb{Z}^N}F^{\ui}P\cdot  t_1^{-i_1}\cdots t_r^{-i_r}\subset P\otimes k[t_1,t_1^{-1},...,t_r,t_r^{-1}],$$
where $F^{\ui}P\subset P$ is the ideal generated by the monomials $x_1^{d_1}\cdots x_N^{d_N}$ such that 
\begin{equation}\label{e-weight}
\sum^N_{m=1}d_{m}\cdot w_{k,m}\ge i_k \mbox{\ \  for any $1\le k\le r$}.
\end{equation}
We have a $k[t_1,...,t_r ]$-algebra
$$\cP=\bigoplus_{(d_1,...,d_N)\in \bZ^{N}_{\ge 0}}x_1^{d_1}\cdots x_N^{d_N}t_1^{-\sum^N_{m=1}d_{m}\cdot w_{1,m}}\cdots t_r^{-\sum^N_{m=1}d_{m}\cdot w_{r,m}}.$$
There is an isomorphism $\cP\cong k[y_1,...,y_N,t_1,...,t_r]$ by sending 
$$y_m\to x_m\cdot t_1^{-w_{1,m}}\cdots t_r^{-w_{r,m}}.$$ 

Then $k[t_1,...,t_r]\subset \cP$ gives  a $\bG_m^r$-equivariant family $\bA^N\times \bA^r\to \bA^r$.

This could be considered as repeatedly using Constuction \ref{extendedrees} $r$-times, each time for the filtration given by \eqref{e-weight}, i.e., induced by the action of $k$-th component in $\bG^r_m$  for $1\le k \le r$.

\end{exmp}

\subsection{Log pairs}
\subsubsection{A log pair with an ideal}
We extend some of the previous discussions in the setting of a log pair with an ideal. 

\begin{defn}[Log canonical thresholds]Let $(X,\Delta)$ be a log canonical pair and $\fa$ an ideal on $(X,\Delta)$. We  define the log canonical threshold at a point $x$ on ${\rm Cosupp}(\fa)$ to be
$$\lct_x(X,\Delta,\fa)=\min_w\frac{A_{X,\Delta}(w)}{w(\fa)},$$  
where $w$ runs over all valuations with $x\in {\rm Cent}_X(w) \subset {\rm Cosupp}(\fa)$.
If $x$ is not on ${\rm Cosupp}(\fa)$, we set $\lct_x(X,\Delta,\fa)=+\infty$. And 
$$\lct(X,\Delta;\fa)=\min_{x\in X}\lct_x(X,\Delta,\fa).$$ 

More generally, if $(X,\Delta)$ is a semi-log canonical pair, $\fa$ is an ideal on $(X,\Delta)$, and $\pi\colon X^{ n}\to X$ is the normalization, and we write 
$$\pi^*(K_X+\Delta)=K_{X^{ n}}+\Delta^{ n}. $$
Then $(X^n,\Delta^n)$ is log canonical, and we define 
$$ \lct_x(X,\Delta,\fa)=\min_{\pi(y_i)=x}\lct_{y_i}(X^n,\Delta^n,\fa\cdot \mathcal{O}_{X^n}). $$
\end{defn}

\begin{say}\label{r-ideal}
Let $(X,\Delta)$ be an lc pair, $c$ a non-negative rational number and $\fa$ an ideal on $X$.  Fix any point $p\in X$.  Then by restricting over an affine neighborhood $U$ of $p$, $c\cdot \mult_{F}(\fa)\le A_{X,\Delta}(F)$ for all divisors $F$ whose center contains $p$ if and only if $(X,\Delta+\frac{c}{k}(H_1+\cdots+H_k))$ is lc around $p$ where  $H_1,...,H_k$ are general hypersurfaces vanishing along $\fa$ and $k\ge c$.  This can be seen as follows:  Let $\mu\colon Y\to X$ be the normalised blow up along $\fa$, then $\mu^{-1}\fa=\mathcal{O}_Y(-E)$ is ample and base point free over $U$. Thus $\mu^*(H_i)(-E)$ are contained in a base point free linear system. In this case, we will say {\it $(X,\Delta+\fa^c)$ is lc} around $p$.

We can similarly define {\it $(X,\Delta+\fa^c)$ slc} by requiring $(X,\Delta)$ is slc and $\big(X^n,\Delta^n+(\fa\cdot \mathcal O_{X^n})^c\big)$ is lc. 
\end{say}

\begin{lem}\label{l-dualcomplex}
Let $(X,\Delta)$ be a klt pair, $\fa$ an ideal on $(X,\Delta)$ with $c=\lct(X,\Delta;\fa)$. Let $\mu\colon (Y,E)\to (X,\Delta+\fa)$ be a log resolution. 

Then all valuations that satisfy $A_{X,\Delta}(v)=c\cdot v(\fa)$ are precisely the cone over the dual complexes formed by the prime components of $E$ which compute the log canonical threshold $\lct(X,\Delta;\fa)$.
\end{lem}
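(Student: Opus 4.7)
The plan is to pull back to the log resolution $\mu\colon Y\to X$ and translate the lc-place condition $A_{X,\Delta}(v)=c\cdot v(\fa)$ into the vanishing of a log discrepancy on a log smooth pair, where the classification of such valuations is classical.

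First, I would write $K_Y+\Delta_Y=\mu^*(K_X+\Delta)$ with $\Delta_Y$ supported on $E$, and $\mu^{-1}\fa\cdot\cO_Y=\cO_Y(-F)$ with $F$ also supported on $E$. Then $c=\lct(X,\Delta;\fa)$ is characterised as the largest rational number so that every coefficient of $\Delta_Y+cF$ is $\le 1$. Let $E_1,\ldots,E_k$ denote the prime components of $E$ whose coefficient in $\Delta_Y+cF$ is exactly $1$; by definition these are precisely the components computing $\lct(X,\Delta;\fa)$.

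For the easy direction, take a quasi-monomial valuation $v$ that in étale local coordinates at the generic point of a nonempty stratum $\bigcap_{j\in J} E_j$ (with $J\subseteq\{1,\ldots,k\}$) is given by $v=\sum_{j\in J}\alpha_j\ord_{E_j}$ with $\alpha_j>0$. Using the crepant identity $A_{X,\Delta}(v)=A_{Y,\Delta_Y}(v)$ and $v(\fa)=v(F)$, together with the linearity of both log discrepancy and valuation on the snc divisor $\Delta_Y+cF$, one gets
$$A_{X,\Delta}(v)-c\cdot v(\fa)\;=\;A_{Y,\Delta_Y+cF}(v)\;=\;\sum_{j\in J}\alpha_j(1-d_j)\;=\;0,$$
where $d_j=1$ is the coefficient of $E_j$ in $\Delta_Y+cF$. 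Hence every quasi-monomial valuation in the cone over the dual complex of $\{E_1,\ldots,E_k\}$ satisfies $A_{X,\Delta}(v)=c\cdot v(\fa)$.

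For the converse, let $v\in\Val_X$ satisfy $A_{X,\Delta}(v)=c\cdot v(\fa)$, equivalently $A_{Y,\Delta_Y+cF}(v)=0$. Here I would invoke the classical monomialisation statement for log smooth pairs: on a log smooth $(Y,D)$ with snc boundary of coefficients $\le 1$, the zero locus of $A_{Y,D}$ on $\Val_Y$ coincides with the union, taken over strata whose defining components all have coefficient one in $D$, of the corresponding quasi-monomial cone (essentially the lc-place characterisation of Jonsson--Musta\c{t}\v{a} on a log smooth model, also implicit in \cite{dFKX-dualcomplex}). Applied to $(Y,\Delta_Y+cF)$, whose coefficient-one components are exactly $E_1,\ldots,E_k$, this forces $v$ to be quasi-monomial along some stratum of the LCT-computing divisors, i.e.\ to lie in the cone over the dual complex of $E_1,\ldots,E_k$. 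The main (and really only) technical input is this last implication; once it is granted, the two inclusions combine to prove the lemma.
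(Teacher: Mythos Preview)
The paper does not give its own proof; it simply cites \cite[Lem.~2.4]{Xu-quasimonomial}. Your argument is correct and is, in outline, exactly the standard proof one finds behind that citation: pass to the crepant log smooth model $(Y,\Delta_Y+cF)$, rewrite the condition $A_{X,\Delta}(v)=c\cdot v(\fa)$ as $A_{Y,\Delta_Y+cF}(v)=0$, and then invoke the Jonsson--Musta\c{t}\v{a}/\cite{dFKX-dualcomplex} description of the zero locus of the log discrepancy function on a sub-lc snc pair. The only point worth making explicit is that $\Delta_Y+cF$ may have negative coefficients (since $(X,\Delta)$ is klt), so the cited input is really for \emph{sub}-lc snc pairs rather than effective ones; this does not affect the argument, but you should state it that way.
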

\begin{proof}See \cite[Lem. 2.4]{Xu-quasimonomial}.
\end{proof}

\subsubsection{Adjunction}

Let $(X,\Delta)$ be an slc pair, and $E\subset \lfloor \Delta \rfloor$ a reduced (possibly reducible) divisor.  We can define (see \cite[Definition 4.2]{Kollar13})
$$(K_X+\Delta)|_{E}=K_{E}+\Delta_{E}, $$
where $\Delta_E:={\rm Diff}_E(\Delta)$. If $E$ is $S_2$, then $E$ is demi-normal (see \cite{Kollar13}*{Definition 5.1}), and $(E,\Delta_E)$ is slc. 

For an ideal $\fa$, we can define $\fa|_{E}$ as the image of $\fa\to \cO_X\to \cO_E$.

\begin{prop}[Inversion of adjunction]\label{p-IOA}
Let $(X,\Delta)$ be an slc pair, and $E\subset \lfloor \Delta \rfloor$ and an ideal $\fa$ on $X$. Assume $E$ is $S_2$.
 Then $(X,\Delta+\fa^c)$ is slc along a neighborhood of $E$ if and only if $(E,\Delta_{E}+(\fa|_E)^c)$ is slc. 
\end{prop}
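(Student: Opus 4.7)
\medskip

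\noindent\textbf{Proof plan.} The plan is to reduce the statement to the classical inversion of adjunction (Kawakita) for log canonical pairs, by passing to the normalization and then replacing the ideal $\fa$ by a general $\bQ$-linear combination of hypersurfaces vanishing along it.

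First I would pass to the normalization. Let $\pi\colon X^n\to X$ be the normalization and write $\pi^*(K_X+\Delta)=K_{X^n}+\Delta^n$. Since $E\subset \lfloor \Delta\rfloor$ is $S_2$, it is demi-normal, so its normalization $E^n$ maps to $X^n$ as a union of irreducible components of $\lfloor\Delta^n\rfloor$, and Koll\'ar's adjunction gives $(K_{X^n}+\Delta^n)|_{E^n}=K_{E^n}+(\Delta^n)_{E^n}$ with $(\Delta^n)_{E^n}$ the pullback of $\Delta_E$ under $E^n\to E$. By the definition of slc for a pair with an ideal recalled in \ref{r-ideal}, the statement to prove is equivalent to: $(X^n,\Delta^n+(\fa\cdot \cO_{X^n})^c)$ is lc along a neighborhood of $E^n$ if and only if $(E^n,(\Delta^n)_{E^n}+((\fa\cdot \cO_{X^n})|_{E^n})^c)$ is lc, together with the compatibility $(\fa|_E)\cdot \cO_{E^n}=(\fa\cdot \cO_{X^n})|_{E^n}$, which follows from right exactness of $-\otimes \cO_{E^n}$.

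Next I would replace the ideal by divisors. By \ref{r-ideal}, for sufficiently divisible $k\ge c$ and general elements $H_1,\dots,H_k\in |\fa|$ on an affine neighborhood of a given point of $E^n$, the pair $(X^n,\Delta^n+(\fa\cdot\cO_{X^n})^c)$ is lc around that point if and only if $(X^n,\Delta^n+\tfrac{c}{k}(H_1+\cdots+H_k))$ is lc there, and similarly for $E^n$ with $\fa|_E$ replaced by general members of $|\fa|_E|$. The key compatibility I need to verify is that the restrictions $H_i|_{E^n}$ are in fact general members of the linear system $|(\fa|_E)\cdot \cO_{E^n}|$ on $E^n$: this follows because after the normalised blow up $\mu\colon Y\to X^n$ along $\fa\cdot\cO_{X^n}$ the divisor $\mu^{-1}(\fa\cdot\cO_{X^n})=\cO_Y(-F)$ is $\mu$-ample and base point free over the chosen affine, so a general member of $|\mu^*H_i-F|$ restricts to a general member of the corresponding linear system on the strict transform of $E^n$; after pushing down we get the desired genericity on $E^n$.

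With these reductions in place, the remaining statement is that $(X^n,\Delta^n+\tfrac{c}{k}\sum H_i)$ is lc near $E^n$ if and only if its adjunction to $E^n$ is lc. This is exactly the content of Kawakita's inversion of adjunction for plt/lc pairs (the coefficient of $E^n$ in $\Delta^n+\tfrac{c}{k}\sum H_i$ is still $1$ since the $H_i$ are general and do not contain components of $E^n$), together with the compatibility of different with adjunction of boundary divisors (\cite{Kollar13}, Chapter~4). Finally I translate back through $E^n\to E$ and $X^n\to X$ to recover the slc statement on the original pair.

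\medskip

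\noindent\textbf{Main obstacle.} The step requiring the most care is the interchange of adjunction with the passage from an ideal to general hypersurface cuts: one must check that restricting general members of $|\fa|$ on $X$ to $E$ (or rather their pullbacks to $X^n,E^n$) yields general members of $|\fa|_E|$, which is where the $S_2$ hypothesis on $E$ and the base point freeness on the normalised blow up are used. Once this genericity is established, the equivalence follows from standard inversion of adjunction.
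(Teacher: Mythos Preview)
Your proposal is correct and follows essentially the same route as the paper: pass to the normalization $X^n\to X$ and $E^n\to E$, verify the compatibility $(\fa|_E)\cdot\cO_{E^n}=(\fa\cdot\cO_{X^n})|_{E^n}$, and then invoke the standard (lc) inversion of adjunction together with Paragraph~\ref{r-ideal}. The paper's write-up is terser---it only spells out the harder implication (slc on $E$ $\Rightarrow$ slc on $X$ near $E$) and simply cites \cite[Theorem 4.9]{Kollar13} for the lc case with an ideal, rather than unwinding the genericity of restricted hypersurfaces on the normalised blow-up as you do---but the underlying argument is the same.
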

\begin{proof}
This is standard if $(X,\Delta)$ is log canonical (see \cite[Theorem 4.9]{Kollar13}). For slc version, we can easily reduce to the lc version by the following. 

Denote by $\pi\colon X^n\to X $ the normalization with the conductor divisor $D\subset X^n$, and $\pi_E\colon E^n\to E$ the normalization of $E$ with $i:E\hookrightarrow X$ and $i^n: E^n\to X^n$. We have $(i^{-1}\fa)|_{E^n}=(i^n)^{-1}(\fa\cdot \mathcal{O}_{X^n})$. If  $(E,\Delta_{E}+(\fa|_E)^c)$ is slc, then $(E^n,\Delta_{E^n}+i^{-1}(\fa|_E)^c)$ is lc, which implies $(X^n, \pi^*\Delta+D+(\fa\cdot \mathcal{O}_{X^n})^c)$ is lc by the above mentioned standard inversion of adjunction and Paragraph \ref{r-ideal}. Thus $(X,\Delta+\fa^c)$ is slc. 
\end{proof}

\begin{lem}\label{l-extract}
Let $(X,\Delta)$ be a quasi-projective klt pair. Assume a prime divisor $E$ computes the log canonical threshold $c:=\lct(X,\Delta;\fa)$ for some ideal $\fa$, then there exists a projective morphism $\mu\colon Y\to X$ such that ${\rm Ex}(\mu)=E$, $-E$ is ample over $X$ and $(Y,E+\mu_*^{-1}(\Delta))$ is log canonical. Moreover, for any $m$ such that $mE$ is Cartier, if we write $\mu^{-1}(\fa^m)=\cO_Y(-m\cdot\ord_{E}(\fa)E)\cdot \fb_m$, then $(Y,E+\mu_*^{-1}(\Delta)+\fb_m^{c/m})$ is log canonical. 
\end{lem}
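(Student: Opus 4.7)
The plan is two-fold: construct the extraction model $\mu\colon Y\to X$ via the minimal model program, then verify the log canonicity of both assertions by a single discrepancy computation. Both rely on the fact that $E$ computing the lct means $A_{X,\Delta}(E) = c\cdot\ord_E(\fa)$, and more generally $A_{X,\Delta}(F)\geq c\cdot\ord_F(\fa)$ for every prime divisor $F$ over $X$ (i.e.\ log canonicity of $(X,\Delta+\fa^c)$).

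\emph{Extracting $E$.} Since $(X,\Delta)$ is klt and $E$ has positive log discrepancy, I would invoke the standard extraction lemma (an application of \cite{BCHM}) to produce a projective birational morphism $\mu\colon Y\to X$ with $\Ex(\mu)=E$, $Y$ being $\bQ$-factorial, and $-E$ being $\mu$-ample. Concretely, after taking a log resolution $\pi\colon W\to X$ on which $E$ appears with other exceptional divisors $F_1,\ldots,F_s$, one runs a relative MMP over $X$ that contracts exactly the $F_i$; the negativity lemma then gives $\mu$-ampleness of $-E$ because $E$ is the unique remaining exceptional divisor.

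\emph{Discrepancy computation.} From $K_Y+E+\mu_*^{-1}(\Delta) = \mu^*(K_X+\Delta) + A_{X,\Delta}(E)\cdot E$, every prime divisor $F$ over $Y$ satisfies
\[
A_{Y,\,E+\mu_*^{-1}(\Delta)}(F) = A_{X,\Delta}(F) - A_{X,\Delta}(E)\cdot \ord_F(E).
\]
Applying $\ord_F$ to $\mu^{-1}(\fa^m) = \cO_Y(-m\cdot\ord_E(\fa)\cdot E)\cdot \fb_m$ gives $\ord_F(\fb_m) = m\bigl(\ord_F(\fa) - \ord_E(\fa)\cdot\ord_F(E)\bigr)$, so
\begin{align*}
A_{Y,\,E+\mu_*^{-1}(\Delta)+\fb_m^{c/m}}(F)
&= A_{X,\Delta}(F) - A_{X,\Delta}(E)\cdot\ord_F(E) - (c/m)\ord_F(\fb_m) \\
&= A_{X,\Delta}(F) - c\cdot\ord_F(\fa) \;\geq\; 0,
\end{align*}
where the simplification uses $A_{X,\Delta}(E) = c\cdot\ord_E(\fa)$ and the inequality is log canonicity of $(X,\Delta+\fa^c)$. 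The case $F=E$ is covered since $\ord_E(\fb_m)=0$ by construction; the log canonicity of $(Y,E+\mu_*^{-1}(\Delta))$ then follows a fortiori by dropping the effective term $\fb_m^{c/m}$ from the boundary.

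\emph{Main obstacle.} The substantive step is the extraction itself: \cite{BCHM} readily provides a $\bQ$-factorial extraction in which $E$ becomes a divisor, but ensuring simultaneously that $\Ex(\mu)$ consists only of $E$ and that $-E$ is $\mu$-ample (not merely $\mu$-nef) requires arranging the MMP to contract the auxiliary exceptional divisors $F_i$ while keeping $E$. Once the model is in place, the remaining content is the bookkeeping above.
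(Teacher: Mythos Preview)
Your proposal is correct and follows essentially the same approach as the paper. The paper's proof is a two-liner: the extraction model with $-E$ ample over $X$ is obtained by invoking \cite[1.3.1]{BCHM} directly, and the log canonicity claims are dismissed as ``following from the definition''; your discrepancy computation is precisely the unpacking of that phrase, and your invocation of \cite{BCHM} for the extraction matches the paper's (the specific reference \cite[1.3.1]{BCHM} already guarantees the unique exceptional divisor and $\mu$-ampleness of $-E$, so the worry in your final paragraph is unwarranted).
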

\begin{proof}The existence of a model $Y$ which only extracts $E$ with $-E$ ample over $X$ is implied by \cite[1.3.1]{BCHM}. The rest of the claim follows from the definition. 
\end{proof}
In the above lemma, if $m_1|m_2$ and $m_1E$ is Cartier, then we have $\fb_{m_2}=\fb^{m_2/m_1}_{m_1}$. Thus by abuse of notation, we will write 
\begin{eqnarray}\label{e-Qideal}
\fb:=\fb_{m}^{1/m} \mbox{ for sufficiently divisible $m$ and } \mu^{-1}(\fa)=\cO_Y(-\ord_{E}(\fa)E)\cdot \fb
\end{eqnarray}
then $(Y,E+\mu_*^{-1}(\Delta)+\fb^{c})$ is log canonical.

\subsubsection{Locally stable family}

The definition of log pair for a family of pairs $(X,\Delta)\to B$ is subtle, especially when $B$ has bad singularities. See \cite{Kollar-modulibook} for more background. 
\begin{defn}[Locally stable family of pairs]\label{d-lsf}
A log pair $(X,\Delta)\to B$ over a normal base $B$ is called {\it locally stable}, if 
\begin{enumerate}
\item $X$ is flat over $B$, $\omega^{[m]}_{X/B}(m\Delta)$ is Cartier for some $m\in \bN$,
\item for any $s\in B$, $\Delta$ does not contain any component of $X_s$ or codimension one component of ${\rm Sing}(X_s)$, and
\item for any $s$, $(X_s,\Delta_s)$ is slc, where $\Delta_s$ is defined to be divisorial pull back of $\Delta$ on $X_s$.
\end{enumerate}
\end{defn}
The main properties we need for locally stable family is that for a base change $B'\to B$, we can define the pull back family. 
See \cite[Chapter 4]{Kollar-modulibook} for a comprehensive treatment.

We need the following easy lemma.
\begin{lem}\label{l-localstable}
Let $(X,\Delta)$ be a log canonical pair, and $\pi\colon X\to B$ a flat morphism to a smooth variety. Let $s\in B$ be a codimension $e$ point and $H_1$,..., $H_e$ general hypersurfaces, whose equations generate $\fm_s/\fm_s^2$. Assume $(X,\Delta+\pi^*(\sum^e_{i=1}H_i))$ is log canonical along $ \pi^{-1}(s)$, then $(X,\Delta)\to B$ is a locally stable family over a neighborhood of $s$.
\end{lem}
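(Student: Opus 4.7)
The plan is to verify the three conditions of Definition \ref{d-lsf} in a neighborhood of $\pi^{-1}(s)$. Flatness (1) is given. For condition (2), since $B$ is smooth, $K_B$ and $\pi^*(\sum H_i)$ are Cartier, so the lc hypothesis gives that $K_X+\Delta+\pi^*(\sum H_i)$, and therefore $K_{X/B}+\Delta$, is $\bQ$-Cartier. The non-containment requirement in (3) will be automatic once (4) is known, since a fiber component sitting inside $\Delta$ together with a general $\pi^*H_i$ would produce a coefficient $\ge 1$ transverse to a Cartier divisor, violating lc-ness.

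The main content is condition (4). I would first focus on the fiber over $s$. Because $B$ is smooth and $s$ has codimension $e$, the local equations $h_1,\ldots,h_e$ of $H_1,\ldots,H_e$ form a regular system of parameters in $\cO_{B,s}$; by flatness of $\pi$, the pullbacks $\pi^*h_i$ form a regular sequence on $X$ near $\pi^{-1}(s)$ whose scheme-theoretic complete intersection is precisely $X_s$. The strategy is to iterate Proposition \ref{p-IOA} (inversion of adjunction) $e$ times, cutting by one $\pi^*H_i$ at each step. At the first step, monotonicity from the lc hypothesis gives $(X,\Delta+\pi^*H_1)$ lc, so the reduced Cartier divisor $\pi^*H_1$ is slc (hence $S_2$), and Proposition \ref{p-IOA} yields that $(\pi^*H_1,\,\Delta|_{\pi^*H_1}+\pi^*(\sum_{i\ge 2}H_i)|_{\pi^*H_1})$ is slc near the fiber over $s$. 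Repeating the same argument on each subsequent slice gives $(X_s,\Delta_s)$ slc after $e$ cuts. For a nearby point $s'\in B$ of codimension $e'\le e$, the analogous argument applies by running the same iterated IoA at $s'$ with $e'$ of the $H_i$'s passing through $s'$ (which exist by genericity after possibly shrinking $B$), or alternatively by invoking closedness of the non-slc locus in a flat family.

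The main obstacle is ensuring that each application of Proposition \ref{p-IOA} is legitimate, i.e., that each intermediate slice is $S_2$. This I would handle inductively: once the pair on the $k$-th slice is known to be slc, the slice is demi-normal and hence $S_2$, legitimizing the next cut. Flatness of $X/B$, combined with the regular-sequence property of the $h_i$'s on the smooth base, guarantees that the schematic intersections behave as expected throughout the induction, so that the final slice is the scheme-theoretic fiber $X_s$ rather than a merely set-theoretic one.
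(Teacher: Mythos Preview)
Your iterated inversion-of-adjunction strategy has a genuine gap in the $S_2$ bookkeeping. You write that once the $k$-th slice is known to be slc, it is demi-normal and hence $S_2$, ``legitimizing the next cut.'' But Proposition~\ref{p-IOA} requires the \emph{divisor} $E$ you are cutting by---here the $(k{+}1)$-st slice---to be $S_2$, not the ambient $k$-th slice. A Cartier divisor in an $S_2$ scheme is in general only $S_1$; to force the $(k{+}1)$-st slice to be $S_2$ you need the $k$-th slice to be $S_3$ along it, and slc alone does not give $S_3$. The same problem already appears at the very first step: from $(X,\Delta+\pi^*H_1)$ lc you conclude ``$\pi^*H_1$ is slc (hence $S_2$),'' but slc presupposes $S_2$, so this is circular.

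The paper's proof supplies exactly the missing depth input. After reducing to the case where $B$ is a curve and $X_s=\pi^*H_e$, it observes that the hypothesis $(X,\Delta+X_s)$ lc forces every lc center of $(X,\Delta)$ to avoid $X_s$ (any lc place whose center lies in the Cartier divisor $X_s$ would acquire discrepancy $<-1$). Then \cite[Cor.~7.21]{Kollar13} gives that $(X,\Delta)$ is $S_3$ along $X_s$, hence $X_s$ is $S_2$. Only after this can one conclude $(X_s,\Delta_s)$ is slc. Your approach can be repaired by invoking this same $S_3$ criterion at each step of the iteration (using the remaining hyperplanes to rule out slc centers on the current slice), but as written the argument does not establish the $S_2$ condition needed for Proposition~\ref{p-IOA}, nor the demi-normality of $X_s$ needed for the final slc conclusion.
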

\begin{proof}By restricting over $\cap^{e-1}_{i=1}H_i$ and localization and applying induction,  we can assume $B$ is a curve, and $H_e=\{s\}$. Since any point on $X_s$ is not a log canonical center, we know $(X,\Delta)$ is $S_3$ along $X_s$ (see \cite[Cor. 7.21]{Kollar13}). Thus $X_s$ is $S_2$. 

Since $(X,\Delta+X_s)$ is log canonical, then by the classification of surface singularities,  ${\rm Supp}(\Delta)$ does not contain any component of $X_s$ as well as any codimension one singularity of $X_s$.

Thus we know $(X_s,\Delta_s)$ is slc. 
\end{proof}

\section{Degeneration to a locally stable family}

\subsection{Constructing locally stable degeneration}

\begin{lem}\label{l-kvvanishing}
Let $(X,\Delta)$ be a locally stable family over a smooth base $B$ with general fibers being  klt. Let $\sigma\colon B\to X$ be a section. Let $\mu\colon Y\to (X,\Delta)$ be a projective birational morphism with an irreducible exceptional divisor $E$ such that $\pi_Y\colon (Y,\mu_*^{-1}\Delta+E)\to B$ is a locally stable family, ${\rm Center}_X(E)=\sigma(B)$, and 
$$-E\sim_{X,\mathbb{Q}}-a(K_Y+\mu_*^{-1}\Delta+E)\ \ \mbox{where } a=A_{X,\Delta}(E)>0$$ is relatively ample over $X$. Then for any $m\ge 0$,  the quotient sheaf
$$\mu_*\cO_Y(-mE)/\mu_*\cO_Y(-(m+1)E)$$ is flat over $B$, and the morphism
$$ \mu_*\cO_Y(-mE)\otimes k_s \to \mu_{s*}\cO_{Y_s}(-mE_s)$$
is an isomorphism for any $s\in B$ and $m\in \bN$. In particular, $\mathcal{O}_X/\mu_*\cO_Y(-mE)$ is flat over $B$.
\end{lem}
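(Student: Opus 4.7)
The plan is to combine relative Kawamata--Viehweg vanishing along $\mu$ with the cohomology-and-base-change theorem along $B$. From the hypothesis $-E\sim_{\mathbb{Q},X}-a(K_Y+\mu_*^{-1}\Delta+E)$ with $a=A_{X,\Delta}(E)>0$, I deduce $K_Y+\mu_*^{-1}\Delta+E\sim_{\mathbb{Q},X}\tfrac{1}{a}E$, hence for every $m\ge 0$ and small $\epsilon>0$,
\[
-mE-\bigl(K_Y+\mu_*^{-1}\Delta+(1-\epsilon)E\bigr)\;\sim_{\mathbb{Q},X}\; -\bigl(m+\tfrac{1}{a}-\epsilon\bigr)E
\]
is $\mu$-ample. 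Since the general fiber of $(X,\Delta)\to B$ is klt and $\mu$ is birational, the perturbed pair $(Y,\mu_*^{-1}\Delta+(1-\epsilon)E)$ is klt (the unique coefficient-$1$ component along the exceptional locus is strictly reduced), so relative KV yields $R^i\mu_*\cO_Y(-mE)=0$ for all $i>0$ and $m\ge 0$.

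Running the identical computation on each fiber, the locally stable $(Y_s,\mu_{s*}^{-1}\Delta_s+E_s)$ is slc and its perturbation retains the same $\mu_s$-ample twist of $-mE_s$. Applying Koll\'ar's vanishing for slc pairs (or the Ambro--Fujino extension) yields
\[
R^i\mu_{s*}\cO_{Y_s}(-mE_s)=0\quad\text{for all }i>0,\ m\ge 0,\ s\in B.
\]
The sheaf $\cO_Y(-mE)$ is $B$-flat: for $m$ with $mE$ Cartier it is a line bundle on the flat family $\pi_Y$, and in general it is rank-one reflexive with $S_2$ (slc) fibers, so the local criterion of flatness applies. Combining $B$-flatness of $\cO_Y(-mE)$ with the fiberwise vanishing above, the cohomology-and-base-change theorem for the $B$-proper morphism $\mu:Y\to X$ shows that $\mu_*\cO_Y(-mE)$ is $B$-flat and that the natural map
\[
\mu_*\cO_Y(-mE)\otimes_{\cO_B} k_s\;\xrightarrow{\ \sim\ }\;\mu_{s*}\cO_{Y_s}(-mE_s)
\]
is an isomorphism for every $s\in B$, giving the second conclusion of the lemma.

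Pushing down the short exact sequence $0\to\cO_Y(-(m+1)E)\to\cO_Y(-mE)\to\cO_E(-mE|_E)\to 0$ via $\mu_*$ stays exact by the KV of the first step. The outer two terms of the resulting sequence on $X$ are $B$-flat, so the quotient $\mu_*\cO_Y(-mE)/\mu_*\cO_Y(-(m+1)E)\cong\mu_*\cO_E(-mE|_E)$ is $B$-flat, which is the first conclusion. Finally, $\cO_X=\mu_*\cO_Y$ is $B$-flat because $(X,\Delta)\to B$ is locally stable, and the filtration $\cO_X\supset\mu_*\cO_Y(-E)\supset\mu_*\cO_Y(-2E)\supset\cdots$ has $B$-flat graded pieces, so by a finite induction each $\cO_X/\mu_*\cO_Y(-mE)$ is $B$-flat.

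The main obstacle is the fiberwise vanishing in the second step: on slc (non-klt) special fibers one cannot invoke the classical Kawamata--Viehweg theorem and must instead appeal to its slc/lc extension. A secondary technical point is verifying that the perturbation $(Y,\mu_*^{-1}\Delta+(1-\epsilon)E)$ and its fibers acquire no new lc centers, which is where the hypothesis that general fibers are klt is used.
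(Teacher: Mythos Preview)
Your route differs from the paper's and has a genuine gap together with a point that needs more care.

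\textbf{The gap.} Your justification for the $B$-flatness of $\cO_Y(-mE)$ --- ``rank-one reflexive with $S_2$ (slc) fibers, so the local criterion of flatness applies'' --- is not a valid argument: the local criterion of flatness says nothing of the sort. What is true (and what the paper records at the outset) is that $\cO_Y(-mE)$ is Cohen--Macaulay on $Y$ since $Y$ is potentially klt and $E$ is $\bQ$-Cartier \cite{KollarMori}*{Cor.~5.25}; a maximal CM sheaf on an equidimensional scheme over a regular base is then flat by a depth argument. Without this flatness, your derived base-change computation does not simplify to $\cO_{Y_s}(-mE_s)$ on the right-hand side.

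\textbf{The delicate point.} For the fiberwise vanishing $R^i\mu_{s*}\cO_{Y_s}(-mE_s)=0$ on slc special fibers, the perturbation you describe does not work: an slc pair cannot be made klt by lowering a boundary coefficient, since the double locus already obstructs klt. One must instead apply Fujino's relative vanishing for slc pairs directly to $(Y_s,\mu_{s*}^{-1}\Delta_s+E_s)$, writing $-mE_s\sim_{\bQ,X_s} K_{Y_s}+\mu_{s*}^{-1}\Delta_s+E_s+(\text{$\mu_s$-ample})$. This is a heavier input than anything the paper invokes, and one still has to handle the case where $-mE_s$ is not Cartier.

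\textbf{How the paper proceeds instead.} The paper avoids both issues. It applies Kawamata--Viehweg only once, on the klt total space $Y$, to obtain $R^i\mu_*\cO_Y(-mE)=0$ for $i>0$. It then proves directly that the cokernel $Q_m=\cO_Y(-mE)/\cO_Y(-(m+1)E)$ is $B$-flat by a cyclic cover trick: locally take a degree-$p$ cover $Y'\to Y$ on which the pullback $E'$ of $pE$ is Cartier; then the pushforward of $\cO_{E'}$ splits as $\bigoplus_{m=0}^{p-1}Q_m$, and $E'\to B$ is flat because $E'$ is CM and equidimensional over the smooth base. Since $Q_m$ is supported on $E$, which is proper over $B$, the vanishing of $R^i\mu_*Q_m$ and standard cohomology-and-base-change give $\mu_*Q_m$ flat over $B$; flatness and base change for $\mu_*\cO_Y(-mE)$ then follow by induction on $m$ from the short exact sequences. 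This route stays entirely on the total space and never needs vanishing on the possibly non-normal fibers.
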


\begin{proof}
Since $Y$ is a potentially klt space, and $E$ is $\mathbb{Q}$-Cartier, we know that $\mathcal{O}_Y(-mE)$ and $\cO_E$ is CM (see \cite[Cor. 5.25]{KollarMori}). Since  $(Y,E+\mu_*^{-1}\Delta)\to B$ is a locally stable family, $\pi_Y|_E\colon E\to B$ is equidimensional. Thus it is flat, as $B$ is smooth. 

Since 
$$-mE\sim_{\mathbb{Q}}K_Y+\mu_*^{-1}\Delta+E+(-am-1)(K_Y+\mu_*^{-1}\Delta+E),$$
and $(Y,\mu_*^{-1}\Delta+(1-\epsilon)E)$ is klt for any $\epsilon\in (0,1]$, by Kawamata-Viehweg vanishing theorem, for any positive integer $m\ge 0$
$$R^i\mu_*(\cO_Y(-mE))=0 \mbox {\ \ for any }i>0.$$

Assume $p$ to be a positive integer, such that $pE$ is Cartier on $Y$.
Define $Q_m$ by the following exact sequence 
\begin{equation}\label{e-long}
0\to \cO_Y((-m-1)E)\to \cO_Y(-mE)\to Q_m\to 0,
\end{equation}
then $Q_{m+p}=Q_m \otimes \cO_Y(-pE)$. 

Moreover, we claim $Q_m$ is flat over $B$ for any $m$.  This follows from the proof of \cite[Cor. 5.25]{KollarMori}. 
In fact, locally around any point $p\in E$, we can take a degree $p$ finite covering $Y'\to Y$ such that the pull back $E'$ on $Y'$ is Cartier. Then  the pushforward of $\cO_{E'}$ on $E$ is the direct sum $\bigoplus^{p-1}_{m=0}Q_m$. Since $E'$ is flat over $B$, the same is true for $Q_m$.

Since $E\to B$ is proper, $Q_m$ supports on $E$, thus $\mu_*Q_m$ is a finite $\cO_B$-module. The long exact sequence \eqref{e-long} implies that $R^i\mu_*(Q_m)=0$ for any $i>0$, which implies 
$\mu_*(Q_m)$ is flat over $B$. This implies that for any $m$, $\mu_*\cO_Y(-mE)$ is flat over $B$ and satisfies the base change, and $\mu_*\cO_Y(-mE)/\mu_*\cO_Y(-(m+1)E)\cong \mu_*Q_m$ is also flat over $B$.

Then the last statement follows from the fact that $\mathcal{O}_X/\mu_*\cO_Y(-mE)$ is the extension of flat modules by induction. 
\end{proof}

\begin{prop}\label{p-localstable}
Let $\pi\colon (X,\Delta)\to B$ be a locally stable family over a smooth base $B$ with general fibers being  klt. Denote by $\sigma\colon B\to X$ a section. Let $\fa$ be an ideal on $X$ whose reduced cosupport is $\sigma(B)$. Assume $\mathcal{O}_X/\fa$ is flat over $B$, and for any $s\in B$,
$\lct(X_s,\Delta_s; \fa_s)=c$ is a constant. Assume $E$ is a prime divisor over $X$ which calculates the log canonical threshold, i.e. $c\cdot \ord_{E}(\fa)=A_{X,\Delta}(E)$. 

Then we have the following facts
\begin{enumerate}
\item  We can extract $E$ to obtain a model $\mu\colon Y\to X$ and $E$ dominates $B$; 
\item  For any $s\in B$, $\mu_s\colon Y_s\to X_s$ is birational (over each component of $X_s$) and $\pi_Y\colon (Y,E+\mu_*^{-1}\Delta)\to B$ is a locally stable family;
\item Write $\mu^{-1}\fa=\cO_Y(-\ord_E(\fa)E)\cdot \fb$ (see \eqref{e-Qideal}), then for any $s\in B$, $(Y_s,E_s+(\mu_s)^{-1}_*\Delta_s+\fb^c_s)$ is slc; and
\item $E$ is demi-normal so that we can define $K_E+\Delta_E:=(K_Y+E+\mu_*^{-1}\Delta)|_E$, and $(E,\Delta_E) \to B$ is locally stable. 
\end{enumerate}
\end{prop}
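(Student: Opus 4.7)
The plan is to construct $Y$ by extracting $E$ via Lemma \ref{l-extract}, and then verify local stability of the resulting family over $B$ by an iterative inversion-of-adjunction argument combined with Lemma \ref{l-localstable}.

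For (1), apply Lemma \ref{l-extract} to the klt pair $(X,\Delta)$ with ideal $\fa$ and the lct-computing divisor $E$: this yields $\mu\colon Y\to X$ with $\Ex(\mu)=E$, $-E$ $\mu$-ample, and $(Y,\mu_*^{-1}\Delta+E+\fb^c)$ log canonical, where $\fb$ is as in \eqref{e-Qideal}. Since the center of $E$ on $X$ equals $\sigma(B)$ and $\sigma$ is a section, $\pi_Y(E)=B$, so $E$ dominates $B$. A crucial byproduct is that $v_E$ restricts trivially to $K(B)$: for any $f\in\mathcal{O}_{B,s}$, the pullback $\pi_Y^*f$ is a unit at the generic point of $E$, hence $\ord_E(\pi^*f)=0$.

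For (2), I would first verify flatness of $Y\to B$ via miracle flatness. Since $(Y,\mu_*^{-1}\Delta+(1-\epsilon)E)$ is klt for small $\epsilon>0$, $Y$ is of klt type and hence Cohen-Macaulay; moreover the fibers of $Y\to B$ are equidimensional of dimension $\dim X_s$, because $\mu$ is an isomorphism off $E$ and $E\to\sigma(B)\cong B$ is equidimensional of relative dimension $\dim X_s-1$. To invoke Lemma \ref{l-localstable}, fix $s\in B$ of codimension $e$ and general hypersurfaces $H_1,\ldots,H_e$ generating $\fm_s/\fm_s^2$, and establish the intermediate \emph{Claim A}: $(X,\Delta+\fa^c+\pi^*(\sum H_i))$ is lc along $\pi^{-1}(s)$. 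Prove Claim A by induction on $e$ using Proposition \ref{p-IOA}: the base case is the hypothesis $\lct(X_s,\Delta_s;\fa_s)=c$, and each inductive step relies on local stability of $(X,\Delta)\to B$ (so that the successive restrictions $X_{H_1\cap\cdots\cap H_i}$ are slc boundary components) together with flatness of $\mathcal{O}_X/\fa$ over $B$ (so that $\fa$ restricts compatibly to $\fa_s$). Granting Claim A, a direct discrepancy computation using $K_Y+\mu_*^{-1}\Delta+E=\mu^*(K_X+\Delta)+A_{(X,\Delta)}(E)\,E$, the factorization $\fa\mathcal{O}_Y=\mathcal{O}_Y(-\ord_E(\fa)E)\cdot\fb$, and the vanishing $\ord_E(\pi^*(\sum H_i))=0$ upgrades Claim A to log canonicity of $(Y,\mu_*^{-1}\Delta+E+\pi_Y^*(\sum H_i)+\fb^c)$ along $\pi_Y^{-1}(s)$. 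Dropping the $\fb^c$ term and invoking Lemma \ref{l-localstable} gives local stability of $(Y,\mu_*^{-1}\Delta+E)\to B$ on a neighborhood of $s$. Since $\mu$ is an isomorphism off $E$ and each $Y_s$ is reduced (being slc), $\mu_s$ is birational on each component of $X_s$.

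For (3), the stronger log canonicity of $(Y,\mu_*^{-1}\Delta+E+\pi_Y^*(\sum H_i)+\fb^c)$ obtained above, combined with iterative inversion of adjunction along the $\pi_Y^*(H_i)$, yields the slc property of $(Y_s,E_s+(\mu_s)_*^{-1}\Delta_s+\fb_s^c)$; the compatibility $\fb|_{Y_s}=\fb_s$ follows from flatness of $\mathcal{O}_Y/\fb$ over $B$, which in turn is a consequence of Lemma \ref{l-kvvanishing} applied once local stability is in hand. Finally for (4), once $(Y,\mu_*^{-1}\Delta+E)\to B$ is locally stable, each $E_s$ is $S_2$ and nodal in codimension one, so $E$ is demi-normal; adjunction for locally stable families (see \cite{Kollar-modulibook}) then yields that $(E,\Delta_E)\to B$ is locally stable. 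The main obstacle is Claim A, specifically tracking that $\fa$ restricts compatibly through the iterated inversion of adjunction — this is precisely where the hypotheses of $\fa$-flatness and constant fiberwise lct are indispensable.
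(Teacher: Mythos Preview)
Your overall strategy matches the paper's --- establish by iterated inversion of adjunction that $(X,\Delta+\fa^c+\pi^*\sum H_i)$ is lc along $\pi^{-1}(s)$ (your Claim~A), upgrade this to $Y$, and feed it into Lemma~\ref{l-localstable}. But two assertions are unjustified as written, and both are exactly the places where Claim~A must be invoked \emph{before} rather than after.

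In (1) you write ``the center of $E$ on $X$ equals $\sigma(B)$''. This is not given: you only know the center lies inside the cosupport $\sigma(B)$; a priori it could be a proper closed subvariety. The paper proves $E$ dominates $B$ by the contrapositive: if a divisor $F$ over $X$ does not dominate $B$, let $s$ be the generic point of its image, of codimension $e$, and take general $H_1,\dots,H_e$ through $s$; then Claim~A (for $F$'s image) gives $A_{X,\Delta}(F)-c\cdot\ord_F(\fa)\ge \sum_i\ord_F(\pi^*H_i)\ge 1>0$, so $F$ cannot compute the lct. You have exactly this tool; you just need to apply it here.

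In (2) you assert ``$E\to\sigma(B)\cong B$ is equidimensional of relative dimension $\dim X_s-1$'' to run miracle flatness. This is the nontrivial content, not a consequence of $E$ being a prime divisor dominating $B$: fiber dimension can jump. The paper deduces it from the upgraded lc statement $(Y,E+\mu_*^{-1}\Delta+\pi_Y^*\sum H_i)$ lc (which you also obtain): if $\dim E_s\ge\dim X_s$ for some $s$, then at the generic point of a top-dimensional component of $E_s$ one has $e+1$ $\bQ$-Cartier Weil divisors $E,\pi_Y^*H_1,\dots,\pi_Y^*H_e$ passing through a codimension-$e$ point of an lc pair, contradicting \cite[Prop.~34]{dFKX-dualcomplex}. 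The same argument rules out extra components of $Y_s$ inside $E$. So reorder: prove Claim~A first, upgrade to $Y$, then deduce equidimensionality and flatness, then apply Lemma~\ref{l-localstable}.
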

\begin{proof}Since $A_{X,\Delta}(E)=c\cdot \ord_E(\fa)$, it follows from \cite[1.3.1]{BCHM} that one can construct a model $\mu\colon Y\to X$ such that $E$ is the only exceptional divisor and $-E$ is ample over $X$.
Let $F$ be a divisor over $X$, and assume $F$ does not dominate $B$. Let $s$ be the generic point of its image on $B$ with ${\rm dim}(\cO_{s,B})=e$. Let $H_1,...,H_e$ be general hypersurfaces passing through $s$. Then since $(X,\Delta+\sum^e_{i=1}H_{i})$ satisfies that 
$$\lct(X,\Delta+\sum^e_{i=1}H_{i};\fa)=\lct(X_s,\Delta_s;\fa_s)=c,$$
by inversion of adjunction,
then 
$$A_{X,\Delta}(F)-c\cdot \ord_F(\fa)\ge \sum^e_{i=1} \ord_F(H_i)\ge 1>0.$$ Thus $E$ dominates $B$. This proves (1).

\medskip

To prove (2), we first show that $\pi_Y|_E$ is equi-dimensional. Otherwise, assume for some $s\in B$, we have $\dim (E_s)\ge \dim(X)-\dim (B)$. Denote by $e={\rm codim}_{E_s}Y$. Consider general hypersurfaces $H_1,..., H_e$ passing through $s$. Then $(X, \Delta+\pi^*\sum^e_{i=1}H_i+c\cdot \fa)$ is log canonical, and $A_{X, \Delta+\sum^e_{i=1}H_i}(E)=c\cdot \ord_E(\fa)$. Thus,
$(Y, E+\mu^{-1}_* \Delta+\pi_Y^*\sum^e_{i=1}H_i)$ is log canonical. However, the generic point of $E_s$ is of codimension $e$ but with $(e+1)$ $\mathbb{Q}$-Cartier Weil divisors $H_1$,..., $H_e$ and $E$ passing through it, which is a contradiction (see \cite[Prop. 34]{dFKX-dualcomplex}). This implies $Y\to B$ is equidimensional. Moreover, $Y_s\to X$ is birational, since if $Y_s$ has another component, then it is contained in $E$ and $\pi_Y^*\sum^e_{i=1}H_i$, thus violates \cite[Prop. 34]{dFKX-dualcomplex} again.
As $Y$ is potentially klt, $Y$ is CM, thus $Y\to B$ is flat.  In particular, $\mu_s\colon Y_s\to X_s$ is birational over each component of $X_s$. By Lemma \ref{l-localstable}, $(Y,E+\mu_*^{-1}\Delta)\to B$ is a locally stable family.

\medskip

We have shown $(X_s,\Delta_s)$, $(X_s,\Delta_s+\fa_s^c)$ and $(Y_s,E_s+(\mu_s)^{-1}_*\Delta_s)$ are slc. We also have shown that $\mu_s^{-1}(\fa_s)=\cO_{Y_s}(\sum_i a_i\Gamma_i)\cdot \fb_s$ with $a_i\cdot c=A_{X_s,\Delta_s}(\Gamma_i)$ for any exceptional divisor $\Gamma_i$ of $\mu_s$ by the above argument. Thus $(Y_s,E_s+(\mu_s)^{-1}_*\Delta_s+\fb^c_s)$ is slc. This proves (3).

\medskip

We proceed to prove (4). As $(Y,E+\mu^{-1}_*\Delta)$ is lc, $E$ is demi-normal by a classification of log canonical surface singularities. Since $E$ is $\mathbb{Q}$-Cartier, and $Y$ is potentially klt,  $E$ is Cohen-Macauley. In particular, $\pi_E\colon E\to B$ is flat since it is equidimensional. To prove it is locally stable, we can restrict over a general curve passing through $s$, and thus assume $B$ is a curve. Thus $(E^n, \Delta_{E^n})\to B$ is a locally stable family, since $E^n$ is flat over $B$
and $(K_{E^n}+E_s^n+\Delta_{E^n})=(K_Y+E+Y_s+\Delta)|_{E^n}$ which implies $({E^n},E_s^n+\Delta_{E^n})$ is lc. Thus $(E,\Delta_{E}+E_s)$ is slc which implies $(E,\Delta_E)\to B$ is locally stable over $s$.
\end{proof}

\begin{lem}\label{l-degslc}
Let $\pi\colon (X,\Delta)\to B$ be a locally stable family over a smooth base $B$ with general fibers being  klt. Denote by $\sigma\colon B\to X$ a section. Let $E$ be a prime divisor whose center on $X$ is $\sigma(B)$. Consider the proper morphism   $\mu\colon Y\to (X={\rm Spec}(R),\Delta)$ which yields the model with ${\rm Ex}(\mu)=E$ and $-E$ is ample,  and assume $(Y,E+\mu_*^{-1}\Delta)\to B$ is a locally stable family. 
Then $(\cX,\Delta_{\cX})\to \bA^1$ in Construction \ref{extendedrees}  yields a locally stable family over $B\times \bA^1$.
\end{lem}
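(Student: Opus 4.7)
My plan is to apply Lemma \ref{l-localstable} to $\cX \to B \times \bA^1$, which reduces local stability to the log-canonicity of the enlarged pair $(\cX, \Delta_\cX + \cX_0)$ along the central fiber $\cX_0 := \pi^{-1}(B \times \{0\})$. For flatness: since $\mu\colon (Y, E + \mu_*^{-1}\Delta) \to B$ is locally stable with $-E$ ample over $X$, Lemma \ref{l-kvvanishing} gives that each quotient $R/\fa_i$ is flat over $A$, where $\fa_i := \mu_*\cO_Y(-iE)$. This is exactly the hypothesis needed for Construction \ref{extendedrees}, and Lemma \ref{l-initial} then delivers the flat family $\cX \to B \times \bA^1$. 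Over the open locus $B \times (\bA^1 \setminus \{0\})$, the family is canonically $(X, \Delta) \times (\bA^1 \setminus \{0\})$, so local stability there is immediate.

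The substantive step is the fiber at $(s, 0)$. Using the base-change assertion of Lemma \ref{l-kvvanishing} together with fiberwise Kawamata--Viehweg vanishing, the central fiber
\[
\cX_{(s,0)} = \Spec \bigoplus_{i \ge 0} (\mu_s)_*\cO_{Y_s}(-iE_s)/(\mu_s)_*\cO_{Y_s}(-(i+1)E_s)
\]
is identified with $\Spec \bigoplus_{i \ge 0} H^0(E_s, \cO_{E_s}(-iE_s|_{E_s}))$, the affine cone over $E_s$ with polarization $-E_s|_{E_s}$. By adjunction applied to the slc pair $(Y_s, E_s + (\mu_s)_*^{-1}\Delta_s)$, the pair $(E_s, \Delta_{E_s})$ is slc and
\[
K_{E_s} + \Delta_{E_s} \sim_{\bQ} A_{X_s,\Delta_s}(E_s) \cdot E_s|_{E_s},
\]
realizing it as a family of log Fano pairs with polarization $-E_s|_{E_s}$. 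The standard cone construction for slc log Fano pairs then yields that $(\cX_{(s,0)}, \Delta_{\cX,(s,0)})$ is slc.

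To conclude via Lemma \ref{l-localstable} at $(s, 0)$, one chooses general hypersurfaces $H_1, \ldots, H_{e-1}$ on $B$ through $s$ and sets $H_e = B \times \{0\}$; their pullbacks to $B \times \bA^1$ together cut out the point $(s, 0)$. The log-canonicity of $(\cX, \Delta_\cX + \pi^*(\sum H_i))$ along $\cX_{(s,0)}$ follows by inversion of adjunction (Proposition \ref{p-IOA}) from the fiberwise slc property established above, and Lemma \ref{l-localstable} then delivers the desired local stability. The main obstacle is the cone identification and the invocation of the cone construction guaranteeing that cones over slc log Fano pairs are slc; the remaining pieces --- flatness via Lemma \ref{l-initial}, triviality over $B \times \bG_m$, and the reduction via Lemma \ref{l-localstable} --- are essentially formal given the tools developed in this section.
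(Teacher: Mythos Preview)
Your core strategy matches the paper's: establish flatness of $\cX \to B \times \bA^1$ via Lemmas \ref{l-kvvanishing} and \ref{l-initial}, use base-change compatibility to identify $\cX_{(s,0)}$ as an (orbifold) cone over the slc log Fano pair $(E_s,\Delta_{E_s})$, and conclude that this cone pair is slc. The paper then verifies Definition \ref{d-lsf} directly from this, whereas you route through Lemma \ref{l-localstable} and inversion of adjunction --- an unnecessary detour, since once every fiber is slc and $K_\cX+\Delta_\cX$ is $\bQ$-Cartier, Definition \ref{d-lsf} holds outright.

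The genuine gap is that you never verify that $K_\cX+\Delta_\cX$ is $\bQ$-Cartier. This is part of Definition \ref{d-lsf}(1), it is a standing hypothesis of Lemma \ref{l-localstable} (which requires $(\cX,\Delta_\cX)$ to be a log canonical \emph{pair}), and it is needed even to formulate the inversion of adjunction you invoke via Proposition \ref{p-IOA}. The paper handles this by citing \cite[Prop.~2.32]{LX-SDCII}; it does not follow from the other tools you assemble, so your argument as written is circular at this point. A smaller imprecision: since $E$ is only $\bQ$-Cartier on $Y$, the central fiber is an \emph{orbifold} cone in the sense of \cite[Sec.~2.4]{LX-SDCI}, and the graded pieces $Q_m=\cO_Y(-mE)/\cO_Y(-(m+1)E)$ are identified with divisorial restrictions to $E$ only after a cyclic cover (cf.\ the proof of Lemma \ref{l-kvvanishing}); this does not affect the slc conclusion but is worth stating correctly.
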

\begin{proof}The calculation in \cite[Prop. 2.32]{LX-SDCII} implies that $K_\cX+\Delta_{\cX}$ is $\bQ$-Cartier. Since $\fa_m=\mu_*\cO_Y(-mE)$ satisfies $\mathcal{O}_X/\fa_m$ is flat over $B$ for any $m\in \mathbb Z_{\ge 0}$ by Lemma \ref{l-kvvanishing}, Construction \ref{extendedrees} commutes with a base change. Thus we know that for any $s\in B$,
$$(\cX,\Delta_{\cX})\times_{B\times \bA^1}(\{s\}\times \bA^1)\to \{s\}\times \bA^1$$
yields a degeneration of $(X_s,\Delta_s)$ to an orbifold cone over $(E_s,\Delta_{E_s})$ (see \cite[Sec. 2.4]{LX-SDCI}), which is slc by Proposition \ref{p-localstable}(4).
Thus $(\cX,\Delta_{\cX})\to \bA^1$ is a locally stable family over $B\times \bA^1$.
\end{proof}

Then we can state the following lemma, which is a key for our degeneration construction. 

\begin{lem}\label{l-equallct}Notation as in Proposition  \ref{p-localstable}. Let $(\cX,\Delta_{\cX})\to B\times \bA^1$ be the locally stable family constructed as in Lemma \ref{l-degslc}.
 Let $s\in B$ and 
 $$(\cX_s,\Delta_{\cX_s}):=(\cX,\Delta_{\cX})\times_{B\times \bA^1} (\{s\}\times \bA^1)$$ with the central fiber $(X_{s,0},\Delta_{{s,0}})$ over 0 the degeneration of $(X_s,\Delta_{s})$. Let $\bin(\fa_s)$ on $X_{s,0}$ be the degeneration of $\fa_s:=\fa \vert_{X_s}$. Then  $\lct\big(X_{s,0},\Delta_{{s,0}};\bin(\fa_s)\big)=c.$
\end{lem}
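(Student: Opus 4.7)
The plan is to establish the equality via two complementary inequalities on the central fiber.

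\textbf{For $\lct \leq c$:} I would exhibit the cone-vertex valuation as a divisorial valuation saturating the inequality. The $\bG_m$-action on $\cX_s$ induced by the $t$-grading of the extended Rees algebra has the central fiber $X_{s,0}$ as its fixed locus. As in the proof of Lemma \ref{l-degslc}, Kawamata--Viehweg vanishing (Lemma \ref{l-kvvanishing}) together with the orbifold-cone construction of \cite[Sec.~2.4]{LX-SDCI} identifies $X_{s,0}$ with the affine orbifold cone over $(E_s, \Delta_{E_s})$ polarized by $L := -E_s|_{E_s} \sim_{\bQ} -A_{X,\Delta}(E)(K_{E_s}+\Delta_{E_s})$. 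The cone-vertex valuation $v_0$ is divisorial with $A_{X_{s,0},\Delta_{s,0}}(v_0)=A_{X,\Delta}(E)$, and since $\bin(\fa_s)$ is homogeneous of minimal degree $\ord_E(\fa)$ in the cone grading, $v_0(\bin(\fa_s))=\ord_E(\fa)$. Therefore $\lct \leq A_{X,\Delta}(E)/\ord_E(\fa) = c$.

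\textbf{For $\lct \geq c$:} I would show $(X_{s,0}, \Delta_{s,0} + \bin(\fa_s)^c)$ is slc. Writing $\mu_s^{-1}\fa_s = \cO_{Y_s}(-\ord_E(\fa) E_s)\cdot\fb_s$ as in \eqref{e-Qideal}, Proposition \ref{p-localstable}(3) yields $(Y_s, E_s+(\mu_s)_*^{-1}\Delta_s+\fb_s^c)$ slc, and restriction (adjunction) to $E_s$ gives $(E_s, \Delta_{E_s}+(\fb_s|_{E_s})^c)$ slc. Under the cone identification, $\bin(\fa_s)$ corresponds to the cone-homogeneous ideal whose restriction to the base $E_s$ is $\fb_s|_{E_s}$, twisted by the polarization $L$ to reflect the exponent $\ord_E(\fa)$. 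The standard cone slc criterion — slc of the base pair together with the vertex log-discrepancy identity $A_{X,\Delta}(E)=c\cdot\ord_E(\fa)$ — then delivers $(X_{s,0},\Delta_{s,0}+\bin(\fa_s)^c)$ slc.

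\textbf{Main obstacle.} The harder direction is the lower bound, and within it the careful matching of $\bin(\fa_s)$ against the cone-lift of $\fb_s|_{E_s}$ with the correct twist by $L$. A cleaner alternative I would try is to lift the model $\mu_s\colon Y_s\to X_s$ to a simultaneous $\bG_m$-equivariant model $\tilde\mu_s\colon\cY_s\to\cX_s$ by applying Construction \ref{extendedrees} to $Y_s$ with the graded sequence $\cO_{Y_s}(-\bullet E_s)$ — the requisite flatness and base-change properties are exactly what Lemma \ref{l-kvvanishing} secures — so that $(\cY_s, \widetilde E_s + \tilde\mu_{s,*}^{-1}\Delta_{\cX_s}) \to \bA^1$ is locally stable with $\tilde\mu_s^{-1}\tilde\fa_s = \cO_{\cY_s}(-\ord_E(\fa)\widetilde E_s)\cdot\tilde\fb_s$. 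Then Proposition \ref{p-localstable}(3) applied along the $\bA^1$-direction to the central fiber gives slc of $(Y_{s,0},\widetilde E_{s,0}+(\tilde\mu_{s,0})_*^{-1}\Delta_{s,0}+\tilde\fb_{s,0}^c)$, which pushes down via $\tilde\mu_{s,0}$ to the desired slc of $(X_{s,0},\Delta_{s,0}+\bin(\fa_s)^c)$ and yields $\lct \geq c$, completing the proof.
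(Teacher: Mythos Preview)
Your first approach to the lower bound---restrict to $E_s$ via adjunction to obtain $(E_s,\Delta_{E_s}+\fc_s^c)$ slc with $\fc_s:=\fb_s|_{E_s}$, then use the orbifold-cone structure of $(X_{s,0},\Delta_{s,0})$ over $(E_s,\Delta_{E_s})$---is exactly the paper's proof. What you call the ``standard cone slc criterion'' is made precise there by blowing up the cone vertex $\mu_{s,0}\colon Y_{s,0}\to X_{s,0}$, identifying the exceptional divisor with $E_s$, checking the displayed equality
\[
\mu_{s,0}^*\big(K_{X_{s,0}}+\Delta_{s,0}+\bin(\fa_s)^c\big)\big|_{E_s}=K_{E_s}+\Delta_{E_s}+\fc_s^c,
\]
and invoking inversion of adjunction (Proposition~\ref{p-IOA}). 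This same computation also shows that the vertex divisor satisfies $A_{X_{s,0},\Delta_{s,0}}(E_s)=c\cdot\ord_{E_s}(\bin(\fa_s))$, which is your argument for $\lct\le c$; the paper leaves this direction implicit. So your main line is correct and coincides with the paper.

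Your ``cleaner alternative'', however, is circular as stated. You propose to apply Proposition~\ref{p-localstable}(3) with base $\bA^1$ to the lifted model $\cY_s\to\cX_s\to\bA^1$. But the standing hypothesis of Proposition~\ref{p-localstable} is that $\lct\big((\cX_s)_t,(\Delta_{\cX_s})_t;(\tilde\fa_s)_t\big)=c$ for \emph{every} $t\in\bA^1$; at $t=0$ this is precisely $\lct(X_{s,0},\Delta_{s,0};\bin(\fa_s))=c$, the assertion you are trying to prove. The proof of part~(3) there really uses that $(X_t,\Delta_t+\fa_t^c)$ is slc at the point in question, so the hypothesis cannot be dropped. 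One can rescue the alternative by instead running inversion of adjunction along the exceptional $\widetilde E_s\subset\cY_s$ (which over $\bA^1$ is a trivial family $E_s\times\bA^1$), but once you do that you have simply reproduced the paper's argument on the total space rather than on the central fiber.
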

\begin{proof} By our assumption  $(Y,E+\mu_*^{-1}\Delta)\to B$ is a locally stable family.  Denote by $\mu^{-1}\fa=\mathcal{O}_Y(-aE)\cdot \fb$ as in \eqref{e-Qideal}. Then $c\cdot a= A_{X,\Delta}(E)$.

Restricting over $s$, we know that $(Y_s, E_s+(\mu_s)_*^{-1}\Delta_s+\fb_s^c)$ is slc by Proposition  \ref{p-localstable}(3). 
Moreover, 
$(E,\Delta_E)\to B$ is locally stable by Proposition  \ref{p-localstable}(4), so $(E_s,\Delta_{E_s})$ is slc. Applying adjunction, we have
$$(K_{Y_s}+ E_s+(\mu_s)_*^{-1}\Delta_s)|_{E_s}=K_{E_s}+\Delta_{E_s}\mbox{\ \ and \ \ } \fb_s|_{E_s}=:\fc_s.$$
It follows that $(E_s,\Delta_{E_s}+\fc_s^c)$ is slc. 

Since $(X_{s,0},\Delta_{s,0})$ is the degeneration of $(X_s,\Delta_s)$ under $(\cX_s,\Delta_{\cX_s})\to \bA^1$, $(X_{s,0},\Delta_{s,0})$ is an orbifold cone over $(E_s,\Delta_{E_s})$ (see \cite{LX-SDCI}*{Section 2.4}).
Moreover, if we blow up the vertex of the cone $\mu_{s,0}\colon Y_{s,0}\to X_{s,0}$, then the exceptional divisor is isomorphic to $E_s$.
Then by the construction, we know
\begin{eqnarray*}
\mu_{s,0}^*\big(K_{X_{s,0}}+\Delta_{s,0}+\bin(\fa)^c\big)|_{E_s}&=&(K_{Y_s}+ E_s+(\mu_s)_*^{-1}\Delta_s+\fb_s^c)|_{E_s}\\
&=&K_{E_s}+\Delta_{E_s}+\fc_s^c.
\end{eqnarray*} Therefore,  $\big(X_{s,0}, \Delta_{{s,0}}+\bin(\fa)^c\big)$ is slc by inversion of adjunction (see Proposition \ref{p-IOA}).
\end{proof}

\subsection{Equivariant degeneration}\label{ss-degeneration}

Repeatedly using Lemma \ref{l-equallct}, we have the following construction, which is a more precise version of  Theorem \ref{t-degeneration}.

\begin{thm}\label{t-equallc}
Let $x\in (X={\rm Spec}(R),\Delta)$ be a klt singularity. Let $\fa$ be an $\fm_x$-primary ideal on $X$. 
Assume there are prime divisors $E_1,...,E_r$ which calculate the log canonical threshold $c:=\lct_x(X,\Delta;\fa)$. 
 
 Then there is a $T(\cong\bG_{m}^r)$-equivariant locally stable family $\pi\colon (\cX_r,\Delta_{\cX_r})\to \mathbb{A}^r$ with a section $\sigma_r\colon \bA^{r}\to \cX_r$, such that $\pi^{-1}(s)\cong (X,\Delta)$ for any $s\in (\bA^1\setminus\{0\})^r$. 
 L

 Moreover, there is an ideal sheaf $\tilde{\fa}$ of $\mathcal{O}_{\cX_r}$ which extends $\fa\times T$ over $\mathbb A^r$,  such that the reduction of ${\rm Cosupp}(\tilde{\fa})$ is $\sigma(\bA^r)$ and for any $s\in \mathbb{A}^r$, $\lct(X_s,\Delta_s;\tilde{\fa}_s)=c$. 
 \end{thm}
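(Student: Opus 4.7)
I will prove the theorem by induction on $r$, applying Proposition \ref{p-localstable}, Lemma \ref{l-degslc}, and Lemma \ref{l-equallct} once per divisor. The base case $r=1$ goes as follows: extract $E_1$ via Lemma \ref{l-extract} to obtain $\mu_1\colon Y_1\to X$ with $(Y_1,\mu_{1*}^{-1}\Delta+E_1+\fb^{c})$ lc; form the extended Rees family of Construction \ref{extendedrees} from $\fa_m:=\mu_{1*}\cO_{Y_1}(-mE_1)$, which is $\bG_m$-equivariant and locally stable by Lemma \ref{l-degslc}; take $\sigma_1$ to be the closure of $\{x\}\times \bG_m$ and $\tilde{\fa}^{(1)}$ to be the flat degeneration of $\fa$ from Definition \ref{d-degI}. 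Lemma \ref{l-extendprimary} then gives the cosupport statement, and Lemma \ref{l-equallct} gives fiberwise lct equal to $c$.

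\textbf{Inductive step.} Assuming a $\bG_m^{r-1}$-equivariant locally stable family $(\cX_{r-1},\Delta_{\cX_{r-1}})\to \bA^{r-1}$ with section $\sigma_{r-1}$ and ideal $\tilde{\fa}^{(r-1)}$ of constant fiberwise lct $c$, I iterate the construction. Over $(\bG_m)^{r-1}\subset \bA^{r-1}$ the family is isomorphic to $X\times(\bG_m)^{r-1}$, so $E_r\times(\bG_m)^{r-1}$ defines a divisorial valuation on $K(\cX_{r-1})$ whose center lies in $\sigma_{r-1}(\bA^{r-1})$. By \cite[1.3.1]{BCHM} (as in Lemma \ref{l-extract}) this valuation is extracted by a birational morphism $\mu\colon Y\to\cX_{r-1}$ with exceptional divisor $\tilde{E}_r$ and $-\tilde{E}_r$ $\mu$-ample. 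Granted the identity $A_{\cX_{r-1},\Delta_{\cX_{r-1}}}(\tilde{E}_r)=c\cdot \ord_{\tilde{E}_r}(\tilde{\fa}^{(r-1)})$ discussed below, Proposition \ref{p-localstable} yields a locally stable family $(Y,\tilde{E}_r+\mu_*^{-1}\Delta_{\cX_{r-1}})\to\bA^{r-1}$; Lemma \ref{l-degslc} then produces a $\bG_m$-equivariant locally stable family $(\cX_r,\Delta_{\cX_r})\to \bA^{r-1}\times\bA^1=\bA^r$. Combining this new $\bG_m$-action with the inductive one gives the $T\cong\bG_m^r$-action, with $\sigma_r$ the closure of $\sigma_{r-1}(\bA^{r-1})\times\bG_m$. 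Define $\tilde{\fa}^{(r)}$ as the flat degeneration of $\tilde{\fa}^{(r-1)}$ (Definition \ref{d-degI}); Lemma \ref{l-extendprimary} gives the cosupport statement, and Lemma \ref{l-equallct} combined with the inductive hypothesis gives fiberwise lct equal to $c$ on every fiber of $\cX_r\to\bA^r$.

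\textbf{Main obstacle.} The point requiring real work is the identity $A_{\cX_{r-1},\Delta_{\cX_{r-1}}}(\tilde{E}_r)=c\cdot \ord_{\tilde{E}_r}(\tilde{\fa}^{(r-1)})$, i.e.\ that the extension $\tilde{E}_r$ still calculates the lct on the total space. On the generic fiber this reduces to $A_{X,\Delta}(E_r)=c\cdot \ord_{E_r}(\fa)$, which holds by hypothesis. For the order of vanishing, $\tilde{\fa}^{(r-1)}$ is the unique flat extension of $\fa\times(\bG_m)^{r-1}$ (Lemma \ref{l-extendprimary}), so computing on the open $(\bG_m)^{r-1}$-locus shows $\ord_{\tilde{E}_r}(\tilde{\fa}^{(r-1)})=\ord_{E_r}(\fa)$. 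For the log discrepancy, the flatness guaranteed by Lemma \ref{l-kvvanishing} at each previous inductive step forces the relative log discrepancy of $\tilde{E}_r$ over $\bA^{r-1}$ to be constant, hence equal to the generic-fiber value $A_{X,\Delta}(E_r)$. Once this identity is secured, the remainder is routine bookkeeping: each iteration adds one $\bG_m$-factor and one degeneration direction, and Construction \ref{extendedrees} commutes with the required base changes.
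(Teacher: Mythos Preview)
Your proposal is correct and follows essentially the same inductive scheme as the paper: induct on $r$, extract the birational transform $\cE_{r}$ of $E_{r}\times\bA^{r-1}$, verify it computes the lct of $\tilde{\fa}^{(r-1)}$ on the total space, and then apply Proposition~\ref{p-localstable}, Lemma~\ref{l-degslc}, and Lemma~\ref{l-equallct} in turn to pass to the next step. One small simplification relative to your write-up: the identity $A_{\cX_{r-1},\Delta_{\cX_{r-1}}}(\tilde{E}_r)=A_{X,\Delta}(E_r)$ needs no flatness or constancy argument, since the log discrepancy of a prime divisor is computed at its generic point, which here lies over the open torus $(\bG_m)^{r-1}$ where the family is literally a product---the paper dispatches both sides of the identity in a single line for this reason.
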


\begin{proof}
When $r=0$, this is trivial. Assume we have proved the case $r=k$. Now we consider the case $r=k+1$.

From the induction, there is a locally stable family $\pi_k\colon (\cX_{k},\Delta_{\cX_k})\to \mathbb{A}^{k}$, with the ideal $\tilde{\fa}_k$ of $\cX_k$ which is flat over $\bA^{k}$, such that 
$$\tilde{\fa}_k|_{X\times (\bA^1\setminus\{0\})^{k}}=\fa\times (\bA^1\setminus\{0\})^{k},$$ 
with a section $\sigma_{k}\colon \bA^{k}\to \cX_k$. By the induction assumption, 
$$\lct\big((\cX_k)_s,(\Delta_{\cX_k})_s, (\tilde{\fa}_k)_s\big)=c$$ for any $s\in \bA^{k}$. In particular, $\lct(\cX_k,\Delta_{\cX_k}, \tilde{\fa}_k)=c$.

\medskip

 Let $\cE_{k+1}$ be the divisor which is birational transform of $E_{k+1}\times\bA^{k}$. Then $\cE_{k+1}$ computes the log canonical threshold $c=\lct(\cX_k,\Delta_{\cX_k},\tilde{\fa}_k)$, because
 $$A_{\cX_k,\Delta_{\cX_k}}(\cE_{k+1})=A_{X,\Delta}(E_{k+1})=c\cdot \ord_{E_{k+1}}(\fa)=c\cdot \ord_{\cE_{k+1}}(\tilde{\fa}_k).$$ 
 By Lemma \ref{l-extract}, we can extract $\cE_{k+1}$ to get a model $\mu_k\colon \cY_{k}\to \cX_{k}$, such that $-\cE_{k+1}$ is relatively ample.  Lemma \ref{l-localstable}(2) implies that over any $s\in \bA^{k}$, $(\cY_k)_s\to (\cX_k)_s$ is a birational morphism, thus 
$$(\cY_k,({\mu}_k)^{-1}_*\Delta_{\cX_k}+\cE_{k+1})\to \bA^{k}$$
is a locally stable family.

Define $\fb_i=(\mu_k)_*(-i\cE_{k+1})$ on $\cX_k$, then the reduction of ${\rm Cosupp}(\fb_i)$ is $\sigma_{k}(\bA^{k})$. It follows from $-\cE_{k+1}$ being relatively ample that $ \bigoplus_{i\in \mathbb{Z}} \fb_i$ is finitely generated,  and $\fb_i$ as well as $\fb_i/\fb_{i+1}$ are flat over $\bA^{k}$ by Lemma \ref{l-kvvanishing}. Thus we can apply  Construction \ref{extendedrees}  to get a flat family 
$$\cX_{k+1}:=\Spec (\bigoplus_{i\in \bZ} \fb_it^{-i})\to \bA^{k}\times \bA^{1}=\bA^{k+1},$$
with a section $\sigma_{k+1}\colon \bA^{k+1}\to \cX_{k+1}$. Moreover, write $\Delta_{\cX_k}=\sum_i d_i\Delta_{k,i}$. As in Definition \ref{d-degI}, we can define the degeneration $\Delta_{\cX_{k+1}}$ of $\Delta_{\cX_k}$ over $\bA^{k+1}$. Denote by $\tilde{\fa}_{k+1}$ the degeneration of $ \tilde{\fa}_k$.

\medskip

Lemma \ref{l-degslc} implies that $(\cX_{k+1},\Delta_{\cX_{k+1}})\to \bA^{k+1}$ is a locally stable family over $\bA^{k+1}$.

\medskip

Finally, to check the last claim for any $t\in \bA^{k+1}$, $\lct(\cX_t,(\Delta_{\cX})_t, (\tilde{{\fa}})_t)=c$.  We only need to check for a point $t$ of the form $(s,0)$ where $s\in \bA^{k}$. Since Construction \ref{extendedrees} commutes with base change, this can be considered as the degeneration induced by 
$$\Spec \Big(\bigoplus_{i\in \bZ}(\mu_k)_{s*}\mathcal{O}_{Y_s}(-i(\cE_{k+1})_s)t^{-i} \Big)\to \Spec k[t].$$ Since $(\mu_k)_s\colon Y_s\to X_s$ satisfies the assumption of Lemma \ref{l-equallct}, we can conclude.
\end{proof}

Let $\mu\colon Y\to X$ be a proper birational morphism from a normal birational model $Y$ such that $E_1,...,E_r\subset Y$. The family of $\cX_r\to \bA^r$ in Theorem \ref{t-equallc} can be explicitly given in the following way. 
\begin{prop}\label{p-totalring}
Notation as in Theorem \ref{t-equallc}. For any $1\le k \le r$, 
\begin{enumerate}[label=\alph*)]
\item the family $\cX_k\to \mathbb{A}^k$ is given by 
\begin{equation}\label{e-multirees}
\cX_k={\rm Spec} \bigoplus_{(i_1,...,i_k)\in \bZ^k} \mu_{*}\mathcal{O}_{Y}(-i_1E_1-\cdots -i_kE_k)~ t_1^{-i_1}\cdots t_k^{-i_k}
\end{equation}
\item for any $j> k$,  if we denote by $\widetilde{\mu_{k,j}}\colon  \cY_{k,j}\to \cX_k$ the morphism which extracts $\cE_j$ as in Theorem \ref{t-equallc}, then
$$\widetilde{\mu_{k,j}}_*  \cO_{\cY_{k,j}}(-m\cE_j)= \bigoplus_{(i_1,...,i_k)\in \bZ^k} \mu_*\mathcal{O}_{Y}(-i_1E_1-\cdots -i_kE_k-mE_j)~t_1^{-i_1}\cdots t_k^{-i_k}.$$
\end{enumerate}
\end{prop}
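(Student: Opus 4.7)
The plan is to prove (a) and (b) simultaneously by induction on $k$, using a valuative interpretation of the extracted divisor $\cE_j$ as the bridge between them.

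For the base case $k=1$, part (a) is Example~\ref{e-era} applied to the graded sequence $\fa_i = \mu_*\cO_Y(-iE_1)$, since by Theorem~\ref{t-equallc} this is precisely the filtration whose extended Rees algebra defines $\cX_1$. Part (b) at $k=1$ is a special case of the general valuative argument given below.

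For the inductive step, assume (a) and (b) hold at level $k$. By the construction in Theorem~\ref{t-equallc}, $\cX_{k+1}$ is obtained from $\cX_k \to \bA^k$ by applying Construction~\ref{extendedrees} to the graded sequence $\fb_i = (\widetilde{\mu_{k,k+1}})_* \cO(-i\cE_{k+1})$. Applying the inductive hypothesis (b) with $j = k+1$ gives
$$\fb_i \;=\; \bigoplus_{(i_1,\ldots,i_k)\in \bZ^k} \mu_*\cO_Y\bigl(-i_1 E_1 - \cdots - i_k E_k - iE_{k+1}\bigr)\, t_1^{-i_1}\cdots t_k^{-i_k},$$
and substituting this into $\bigoplus_{i\in \bZ} \fb_i\, t_{k+1}^{-i}$ and reindexing yields (a) at $k+1$.

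It remains to establish (b). Because $\cX_k$ is normal (inherited from the klt/potentially klt property of the total space of the locally stable family produced in Theorem~\ref{t-equallc}) and $\widetilde{\mu_{k,j}}$ extracts the single divisor $\cE_j$ with $-\cE_j$ relatively ample, one has the valuative identification
$$\widetilde{\mu_{k,j}}_* \cO_{\cY_{k,j}}(-m\cE_j) \;=\; \{\, f \in \cO_{\cX_k} : \ord_{\cE_j}(f) \ge m \,\}.$$
Since $\cE_j$ is the birational transform of $E_j \times \bA^k$, it agrees with $E_j \times (\bG_m)^k$ on the open torus locus, so $\ord_{\cE_j}$ restricts there to the pullback of $\ord_{E_j}$ from $X$; in particular $\ord_{\cE_j}(t_\ell) = 0$ for $1 \le \ell \le k$. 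Using (a) at level $k$, a typical homogeneous element of $\cX_k$ has the form $f\, t_1^{-i_1}\cdots t_k^{-i_k}$ with $f \in \mu_*\cO_Y(-\sum_\ell i_\ell E_\ell)$, and the condition $\ord_{\cE_j} \ge m$ becomes $\ord_{E_j}(f) \ge m$, that is $f \in \mu_*\cO_Y(-\sum_\ell i_\ell E_\ell - mE_j)$. This is exactly the formula in (b).

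\textbf{Main obstacle.} The only real subtlety is the valuative identification of the pushforward. It rests on normality of $\cX_k$ (guaranteed by local stability together with klt generic fiber from Theorem~\ref{t-equallc}) and on the explicit description of $\cE_j$ as the birational transform of $E_j\times \bA^k$, which is how the construction in Theorem~\ref{t-equallc} selects this divisor. Once these two facts are in hand, the induction runs mechanically, with (b) at level $k$ feeding into (a) at level $k+1$, and (a) at level $k+1$ feeding back into (b) at level $k+1$.
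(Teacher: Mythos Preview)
Your inductive scheme for part (a) matches the paper exactly: use (b) at level $k$ with $j=k+1$ to identify $\fb_i$, then substitute into the extended Rees construction. The genuine difference is in how (b) is obtained. The paper proves $b)_{k+1}$ from $b)_k$: it observes that $\widetilde{\mu_{k+1,j}}_*\cO(-m\cE_{k+1,j})$ is flat over $\bA^{k+1}$ (by Lemma~\ref{l-kvvanishing}), hence is the unique flat extension of $\widetilde{\mu_{k,j}}_*\cO(-m\cE_{k,j})$ across $t_{k+1}=0$, and then invokes Lemma~\ref{l-extendprimary} to write this extension as $\bigoplus_{i_{k+1}}\bigl(\widetilde{\mu_{k,j}}_*\cO(-m\cE_{k,j})\cap\fb_{i_{k+1}}\bigr)t_{k+1}^{-i_{k+1}}$, which unwinds to the desired formula. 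You instead prove $b)_k$ directly from $a)_k$ by the valuative identity $\widetilde{\mu_{k,j}}_*\cO(-m\cE_j)=\{f:\ord_{\cE_j}(f)\ge m\}$ together with the observation that $\ord_{\cE_j}$ is the Gauss extension of $\ord_{E_j}$ to $K(X)(t_1,\dots,t_k)$, so that on a homogeneous element $f\,t_1^{-i_1}\cdots t_k^{-i_k}$ one simply has $\ord_{\cE_j}=\ord_{E_j}(f)$. This is correct and slightly more economical: it bypasses Lemma~\ref{l-extendprimary} and the flatness bookkeeping entirely, and it makes the induction for (b) independent of the previous level. The paper's route, on the other hand, keeps the argument inside the flat--degeneration framework already set up in Section~\ref{ss-degeneration}, so that (b) is visibly an instance of ``degenerate the ideal along with everything else''. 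One small point worth making explicit in your write--up is the identity $\mu_*\cO_Y(-\textstyle\sum_\ell i_\ell E_\ell)\cap\{f:\ord_{E_j}(f)\ge m\}=\mu_*\cO_Y(-\sum_\ell i_\ell E_\ell-mE_j)$, which you use tacitly; it holds because on normal $Y$ each side is $\{f\in R:\ord_{E_\ell}(f)\ge i_\ell\ \forall\ell,\ \ord_{E_j}(f)\ge m\}$ and $E_j\notin\{E_1,\dots,E_k\}$.
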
 
\begin{proof}We will prove $b)_{\le k}+a)_{\le k}  \Rightarrow a)_{\le k+1}\Rightarrow b)_{\le k+1}$. Both $a)_0$ and $b)_0$ are trivial. 


Assume we have shown $a)_{k}$ which gives a family $\cX_k\to \bA^{k}$ and  $b)_{k}$. Let $\widetilde{\mu_{k}}\colon \cY_{k}\to \cX_{k}$ be the model extracting $\cE_{k+1}$ which is the birational transform of $E_{k+1}\times \bA^{k}$, 
then by $b)_k,$
$$\fb_i:=\widetilde{\mu_{k}}_* \cO_{\cY_{k}}(-i\cE_{k+1})= \bigoplus_{(i_1,...,i_k)\in \bZ^k} \mu_*\mathcal{O}_{Y}(-i_1E_1-\cdots -i_kE_k-iE_{k+1})~t_1^{-i_1}\cdots t_k^{-i_k}.$$
So by our construction, for the family $\cX:=\Spec \cR \to \bA^{k+1}$,
\begin{eqnarray*}
 \cR&=&\bigoplus_{i\in \bZ} \widetilde{\mu_{k}}_* \cO_{\cY_{k}}(-i\cE_{k+1})t^{-i}_{k+1}\\
  &=& \bigoplus_{i\in \bZ}   \bigoplus_{(i_1,...,i_k)\in \bZ^k} \mu_* \mathcal{O}_{Y}(-i_1E_1-\cdots -i_kE_k-iE_{k+1})~t_1^{-i_1}\cdots t_k^{-i_k} \cdot t^{-i}_{k+1}\\
&=& \bigoplus_{(i_1,...,i_k,i)\in \bZ^{k+1}}\mu_* \mathcal{O}_{Y}(-i_1E_1-\cdots -i_kE_k-iE_{k+1})~t_1^{-i_1}\cdots t_k^{-i_k} t^{-i}_{k+1},
\end{eqnarray*}
which says $a)_{k+1}$ holds. 

For $j>{k+1}$, let $\widetilde{\mu_{k+1,j}} \colon \cY_{k+1,j}\to \cX_{k+1}$, which extracts  the birational transform $\cE_{k+1,j}$ of $E_j\times \bA^{k+1}$ and let $\widetilde{\mu_{k,j}} \colon \cY_{k,j}\to \cX_{k}$ extract $\cE_{k,j}$ the birational transform of $E_j\times \bA^{k}$.

By $b)_k$, for any fixed $m$,
$$\widetilde{\mu_{k,j}}_* \cO_{\cY_{k,j}}(-m\cE_{k,j})= \bigoplus_{(i_1,...,i_k)\in \bZ^k} \mu_* \mathcal{O}_{Y}(-i_1E_1-\cdots -i_kE_k-mE_j)~t_1^{-i_1}\cdots t_k^{-i_k}. $$
Since $\widetilde{\mu_{k+1,j}}_* \cO_{\cY_{k+1,j}}(-m\cE_{k+1,j})$ is flat over $\mathbb{A}^{k+1}$, in particular that
$\widetilde{\mu_{k+1,j}}_* \cO_{\cY_{k+1,j}}(-m\cE_{k+1,j})$  is the flat degeneration family of  $\widetilde{\mu_{k,j}}_* \cO_{\cY_{k,j}}(-m\cE_{k,j})$ over $ \bA^1$, thus 
\begin{eqnarray*}
& &\widetilde{\mu_{k+1,j}}_* \cO_{\cY_{k+1,j}}(-m\cE_{k+1,j})	\\
& =&\bigoplus_{i_{k+1}\in \mathbb{Z}}\widetilde{\mu_{k,j}}_* \cO_{\cY_{k,j}}(-m\cE_{k,j})\cap \fb_{i_{k+1}} t_{k+1}^{-i_{k+1}} \hspace{5mm}\mbox{(by Lemma \ref{l-extendprimary} and  base-change)}\\
&=&\bigoplus_{i_{k+1}\in \mathbb{Z}} \left( \bigoplus_{(i_1,...,i_k)\in \bZ^k} \mu_* \mathcal{O}_{Y}(-i_1E_1-\cdots -i_kE_k-mE_j)~t_1^{-i_1}\cdots t_k^{-i_k} \right)\cap \fb_{i_{k+1}} t_{k+1}^{-i_{k+1}}\\
&=& \bigoplus_{i_{k+1}\in \mathbb{Z}} \left( \bigoplus_{(i_1,...,i_k)\in \bZ^k} \mu_* \mathcal{O}_{Y}(-i_1E_1-\cdots -i_kE_k-mE_j)\cap \fb_{i_{k+1}} \right)~t_1^{-i_1}\cdots t_k^{-i_k} t_{k+1}^{-i_{k+1}}\\
&=& \bigoplus_{(i_1,...,i_k,i_{k+1})\in \bZ^{k+1}} \mu_* \mathcal{O}_{Y}(-i_1E_1-\cdots -i_kE_k-i_{k+1}E_{k+1}-mE_j)~t_1^{-i_1}\cdots t_k^{-i_k} t_{k+1}^{-i_{k+1}},
\end{eqnarray*}
which is exactly the claim $b)_{k+1}$.
\end{proof}

\begin{cor}\label{c-deg}Let $s=(s_1,...,s_r)\in \bA^r$ and $s_{j_1},..., s_{j_k}$ are precisely the components which are $0$. For any $\ui=(i_1,...,i_k)\in \bZ^{k}$, let $\fb_{\ui}=\mu_* \mathcal{O}_{Y}(-i_1E_{j_1}-\cdots -i_{k}E_{j_k})$, then the fiber $X_s$ is isomorphic to $\Spec T$ where 
$T\cong \bigoplus_{\ui\in \bZ^k} \fb_{\ui}/\fb_{>\ui} .$

In particular, denote by
$R_*=\bigoplus_{\underline{i}\in \bZ^r_{\ge 0}} \fa_{\underline{i}}/\fa_{>\underline{i}},$
where $\fa_{\underline{i}}=\mu_* \mathcal{O}_{Y}(-i_1E_1-\cdots -i_rE_r)$. 
Then $X_0$ is isomorphic to $\Spec R_*$.
\end{cor}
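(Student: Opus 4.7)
The approach combines the explicit presentation of $\cR$ in Proposition~\ref{p-totalring}(a) with the $\bG_m^r$-equivariance of $\cX_r\to\bA^r$. By equivariance, the isomorphism class of $X_s = (\cX_r)_s$ depends only on the $\bG_m^r$-orbit of $s$, and these orbits on $\bA^r$ are indexed by the vanishing set $S = \{j_1,\ldots,j_k\}\subseteq\{1,\ldots,r\}$. So I reduce to computing $X_{s^{(S)}}$ where $s^{(S)}_{j_l} = 0$ for $l = 1,\ldots,k$ and $s^{(S)}_m = 1$ for $m\notin S$.

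Writing
$$\cR = \bigoplus_{\ui\in\bZ^r}\fa_{\ui}\cdot t_1^{-i_1}\cdots t_r^{-i_r}, \qquad \fa_{\ui} = \mu_*\cO_Y\bigl(-\textstyle\sum_l i_l E_l\bigr),$$
I base-change in two steps. \emph{Step 1}: quotient by $(t_m - 1 : m\notin S)$. Since $t_m = 1$ becomes a unit in the quotient, this equals $\cR[t_m^{-1}: m\notin S]/(t_m - 1 : m\notin S)$, which makes the localization easier to analyze. The key input is that $\fa_{\ui}$ depends only on the positive part of $\ui$, because $\mu_*\cO_Y(D) = \mu_*\cO_Y(D+F)$ for any $\mu$-exceptional effective $F$; consequently,
$$\cR\bigl[t_m^{-1}: m\notin S\bigr] = \Bigl(\bigoplus_{\underline{j}\in\bZ^k}\fb_{\underline{j}}\cdot t_{j_1}^{-j_1}\cdots t_{j_k}^{-j_k}\Bigr)\otimes_k k\bigl[t_m^\pm : m\notin S\bigr],$$
where $\fb_{\underline{j}} := \mu_*\cO_Y(-\sum_l j_l E_{j_l})$. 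Imposing $t_m = 1$ produces the rank-$k$ extended Rees algebra $\cR' := \bigoplus_{\underline{j}\in\bZ^k}\fb_{\underline{j}}\cdot t_{j_1}^{-j_1}\cdots t_{j_k}^{-j_k}$. \emph{Step 2}: quotient $\cR'$ by $(t_{j_1},\ldots,t_{j_k})$. Since $t_{j_l}$ has multidegree $-\underline{e}_l$ and multiplication by $t_{j_l}$ carries $\fb_{\underline{j}+\underline{e}_l}$ into $\fb_{\underline{j}}$ (as the inclusion of ideals), the multigrade-$\underline{j}$ piece of the quotient is $\fb_{\underline{j}}/\sum_l\fb_{\underline{j}+\underline{e}_l} = \fb_{\underline{j}}/\fb_{>\underline{j}}$. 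The same positive-part observation shows this piece vanishes whenever some $j_l < 0$, so the surviving multigrades lie in $\bZ^k_{\ge 0}$, yielding $X_{s^{(S)}}\cong\Spec\bigoplus_{\underline{j}\in\bZ^k_{\ge 0}}\fb_{\underline{j}}/\fb_{>\underline{j}}$.

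The main subtlety is Step 1 and the identification of the localization; this crucially uses the pushforward identity for $\mu$-exceptional effective divisors, which lets one drop the indices in positions $\notin S$. The ``In particular'' case is $S = \{1,\ldots,r\}$, where Step 1 is vacuous and $X_0 = \Spec\cR/(t_1,\ldots,t_r)\cong \Spec R_*$ follows directly from Step 2.
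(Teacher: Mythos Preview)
Your argument is correct and is precisely the natural derivation from Proposition~\ref{p-totalring}(a) that the paper leaves implicit (the corollary is stated without proof). The only point worth noting is that your key identity $\mu_*\cO_Y(D)=\mu_*\cO_Y(D+F)$ for $F$ effective $\mu$-exceptional uses that $X$ is normal and that the centers of the $E_l$ have codimension $\ge 2$ on $X$; both hold here since $x\in(X,\Delta)$ is klt and each $E_l$ is an lc place of an $\fm_x$-primary ideal, hence centered at $x$.
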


Write $e_k$ be the $k$-th $\bG_m$ in $\bG_m^r$. Let $w:=(w_{1},...,w_r)\in \bN^r_{+}$, and the coweight  $\xi_w:=\sum^r_{k=1} w_ke_k$. 
The morphism $i_w\colon \bG_m\to (\bG_m)^r$, can be extended to a morphism $\bar{i}_w\colon\bA^1\to \bA^r$ such that $\bar{i}_w(0)=0$. Thus the base change family will have its special fiber isomorphic to $X_0:=\cX\times_{\bA^r}\{0\}$.

\bigskip

Now we assume the degeneration $X_0$ is irreducible. This implies that the filtration induced by $\xi_w$, which satisfies
$$\wt_{\xi_w}(f):=\wt_{\xi_w}(\bar{f})=\sum^r_{k=1}w_k\cdot\ord_{E_k}(f),$$
is a valuation by Lemma \ref{l-filtrationrestriction}.

Since this is true for any $f\in R$, we know that for any element $\frac{f}{g}\in K$ with $f,g\in R$, 
\begin{equation}\label{e-degeneration}
\wt_{\xi_w}(\frac{f}{g})=\wt_{\xi_w}(f)-\wt_{\xi_w}(g)=\sum^r_{k=1}w_k\cdot(\ord_{E_k}(f)-\ord_{E_k}(g))=\sum^r_{k=1}w_k\cdot\ord_{E_k}(\frac{f}{g}).
\end{equation}
Thus this implies the following statement. 
\begin{lem}\label{l-d=w}
With the notation of Theorem \ref{t-equallc}, we assume $X_0$ is irreducible. Assume  $\mu\colon Y\to (X,\Delta)$ is a birational model such that $ E_1,..., E_r$ are contained in the exceptional divisor and  $Z$ a component of their intersection with $(\eta(Z)\in Y, E_1+\cdots+E_r)$ being qdlt, then $\wt_{\xi_w}$ is the same as the quasi-monomial valuation $v_{w}$ of at $(\eta(Z)\in Y, E_1+\cdots+E_r)$ with weight $(w_1,...,w_r)$. (In particular, such $Z$ is unique.)
\end{lem}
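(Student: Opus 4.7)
Write $v := \wt_{\xi_w}$. The paragraph preceding the lemma combines the hypothesis that $X_0$ is irreducible with Lemma \ref{l-filtrationrestriction} to give that $v$ is a genuine valuation on $R$, and hence on $K=K(X)=K(Y)$. My plan is to identify $v$ with the quasi-monomial valuation $v_w$ by working étale-locally at $\eta(Z)$, where the qdlt assumption makes the picture toric, and then to deduce uniqueness of $Z$ from this identification.

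First, the qdlt hypothesis at $\eta(Z)$ supplies étale-local parameters $z_1,\ldots,z_r$ with $E_i=(z_i)$; in these coordinates $v_w$ is the monomial valuation with weights $(w_1,\ldots,w_r)$. Since $E_1,\ldots,E_r$ are extracted on $Y$, the ideal $\fa_{\ui}=\mu_*\cO_Y(-\sum_k i_kE_k)$ localized at $\eta(Z)$ is the monomial ideal $(z^{\ui})\cdot\cO_{Y,\eta(Z)}$, so the rank-one filtration $\cF^t R:=\sum_{\sum w_k i_k\ge t}\fa_{\ui}$ localizes to the monomial ideal generated by $\{z^{\ui}:\sum_k w_k i_k\ge t\}$, which is nothing but the valuative ideal $\{v_w\ge t\}\cdot\cO_{Y,\eta(Z)}$.

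I next aim to promote this local identification to the global equality $\cF^t R=\{f\in R:v_w(f)\ge t\}$. The inclusion $\subseteq$ is immediate, because $f=\sum f_{\ui}\in\cF^t R$ with $f_{\ui}\in\fa_{\ui}$ and $\sum w_k i_k\ge t$ implies $v_w(f_{\ui})\ge t$ by local divisibility of $f_{\ui}$ by $z^{\ui}$. For the reverse inclusion, starting from $f\in R$ with $v_w(f)=s\ge t$, I plan to peel off a global representative of the $v_w$-initial form of $f$, viewed inside the $\xi_w$-homogeneous piece $\bigoplus_{\sum w_k i_k=s}\fa_{\ui}/\fa_{>\ui}$ of $R_*$, and iterate on the remainder (which has strictly greater $v_w$-value). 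The domain property of $R_*$ (equivalent to irreducibility of $X_0$) guarantees that at each stage the initial form lifts to a nonzero element and no cancellation occurs, so the iteration produces a Cauchy sum in the $\fm_x$-adic completion of $R$ whose limit is $f$; faithful flatness of completion then recovers membership of $f$ in the global ideal $\cF^t R$.

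Finally, the identification $v=v_w$ yields the uniqueness of $Z$: if $Z'\ne Z$ were another component of $\bigcap_i E_i$ at whose generic point the pair $(Y,\sum_i E_i)$ is qdlt, the same argument would force $v_w^Z=\wt_{\xi_w}=v_w^{Z'}$, but these two quasi-monomial valuations have distinct centers $\overline{\eta(Z)}\ne\overline{\eta(Z')}$ on $Y$, a contradiction. The main obstacle is the reverse inclusion $\{v_w\ge t\}\cap R\subseteq\cF^t R$, which converts a purely local condition at $\eta(Z)$ into a global ideal membership on $X$; this is the step where irreducibility of $X_0$ is indispensable and where the interplay between the rank-$r$ multi-graded structure on $R_*$ and its rank-one $\xi_w$-coarsening must be exploited carefully.
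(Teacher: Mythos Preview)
Your approach is substantially more elaborate than the paper's, and the hard direction contains a genuine gap. The paper's proof is essentially two lines, exploiting the explicit formula $\wt_{\xi_w}(f)=\sum_k w_k\,\ord_{E_k}(f)$ established in \eqref{e-degeneration} just before the lemma. Writing $f=\sum_\beta c_\beta z^\beta$ in $\cO_{Y,\eta(Z)}$ with $c_\beta$ units, one has $v_w(f)=\min_\beta\langle w,\beta\rangle$ by definition, while $\ord_{E_k}(f)=\min_\beta\beta_k$. The inequality $\wt_{\xi_w}(f)\le v_w(f)$ is then the elementary fact $\sum_k w_k\min_\beta\beta_k\le\min_\beta\sum_k w_k\beta_k$, which needs no hypothesis on $X_0$. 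The reverse inequality is where irreducibility enters, but only through the \emph{valuation axiom} for $\wt_{\xi_w}$: since $\wt_{\xi_w}$ is a valuation, $\wt_{\xi_w}(f)\ge\min_\beta\wt_{\xi_w}(c_\beta z^\beta)=\min_\beta\langle w,\beta\rangle=v_w(f)$. No filtration comparison, no iteration, no completion.

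The gap in your argument is the ``peeling off'' step. You assert that the $v_w$-initial form of $f$ can be viewed inside the $\xi_w$-homogeneous piece $\bigoplus_{\langle w,\ui\rangle=s}\fa_{\ui}/\fa_{>\ui}$ of $R_*$, but the $v_w$-initial form lives in $\gr_{v_w}R$, and identifying this with a piece of $R_*$ presupposes exactly the equality $v_w=\wt_{\xi_w}$ you are trying to prove. Even if you work locally at $\eta(Z)$ and write the $v_w$-initial part as $\sum_{\langle w,\beta\rangle=s}c_\beta z^\beta$, you then need each such term to come from a \emph{global} element of $\fa_\beta\subset R$; the passage from the local germ to a global section in the correct $\fa_{\ui}$ is precisely the local-to-global step you flag as the main obstacle, and nothing in your sketch addresses it. The domain property of $R_*$ does not by itself produce global lifts. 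Your uniqueness argument for $Z$ is fine and matches the paper's parenthetical remark.
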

\begin{proof}Let $z_1,...,z_r$ be functions in $\cO_{Y,\eta(Z)}$ such that $z_i$ is the equation of $E_i$ near $\eta(Z)$. For any $f$ in $\cO_{Y,\eta(Z)}$, we can write 
$$f=\sum_{\beta\in \bZ^{r}_{\ge 0}}c_{\beta}z^{\beta}\mbox{ where $c_{\beta}\neq 0$}. $$
By definition $v_w(f)=\min_{\beta} \langle \beta, w\rangle$. By \eqref{e-degeneration}, we know $\wt_{\xi_{w}}(f)\le v_w(f)$. On the other hand, since $\wt_{\xi_w}$ is a valuation, we have 
$$\wt_{\xi_{w}}(f)\ge \min_{c_{\beta}\neq 0}\wt_{\xi_{w}}(z^{\beta})=\min \langle \beta, w\rangle=v_w(f).$$
\end{proof}

\begin{proof}[Proof of Theorem \ref{t-main2}]First assume the existence of $E_1,...,E_r$.
We may assume the coordinates $w_i$ of $w$ are all positive. It follows from Lemma \ref{l-d=w}, that if $X_0$ is irreducible, the associated graded ring for $v_{w}$ is the same as the flitration induced by $\xi$, which is $R_*$.

\medskip

For the converse, the cone $\LC(X,\Delta+\fa^c)$  consisting of all lc places of $(X,\Delta+\fa^c)$ contains a minimal rational cone of dimension $r$ whose interior contains $v$. It follows from \cite[Lemma 2.10]{LX-SDCII} that we can pick up $E_1,...,E_r$ contained in this cone which satisfy Condition (1) and (2).
\end{proof}

\subsection{Finite generation implies optimal degeneration}\label{s-global}

Let $(X,\Delta)$ be a log Fano pair, i.e. $(X,\Delta)$ is klt and $-K_X-\Delta$ is ample. Let  $r\in \bN_{+}$ such that $-r(K_X+\Delta)$ is Cartier. We want to consider the finite generation for the graded rings 
$$R:=\bigoplus_mR_m=\bigoplus_m H^0(-rm(K_{X}+\Delta)).$$ 

In the last section, we will show Conjecture \ref{c-main} implies the existence of an optimal degeneration. The implication is probably well known among experts. 

\begin{prop}\label{p-optimal} 
For a log Fano pair $(X,\Delta)$, assume $\delta(X,\Delta)\le 1$ and there is a quasi-monomial valuation $v$ computing $\delta(X,\Delta)$ such that $\gr_v(R)$ is finitely generated.  Then there exists a divisor $E$ over $X$ computing $\delta(X,\Delta)$. In particular, $(X,\Delta)$ is not K-stable. 
\end{prop}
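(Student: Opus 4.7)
My plan is to apply Theorem~\ref{t-main2} to extract an $r$-dimensional simplex of quasi-monomial valuations around $v$, to prove that $A_{X,\Delta}(v_w)$ and $S_{X,\Delta}(v_w)$ are both $\bR$-linear in the weight $w$ on this simplex (using the torus action produced by finite generation), and to conclude via fractional linearity that an interior minimum forces $A/S$ to be constant, so that a vertex divisorial valuation $E_i$ also computes $\delta$. Concretely: after reducing to the affine cone over $\Spec R$ so that the machinery of Section~\ref{ss-degeneration} applies, I would use \cite{BLX-openness} to write $v$ as an lc place of some $\bQ$-complement $(X,\Delta+\fa^c)$. Theorem~\ref{t-main2} then supplies prime divisors $E_1,\dots,E_r$ (with $r$ the rational rank of $v$), extracted on a common model $\mu\colon Y\to X$, such that $v=v_{w_0}$ is quasi-monomial at a point $\eta\in\bigcap_i E_i$ with weight $w_0\in\bR^r_{>0}$ having $\bQ$-linearly independent coordinates, and the central fibre $X_0$ of $(\cX,\Delta_{\cX})\to\bA^r$ from Theorem~\ref{t-equallc} is irreducible.

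Next I would establish the $\bR$-linearity of the invariants in $w$. Linearity of $A_{X,\Delta}(v_w)=\sum_i w_i A_{X,\Delta}(E_i)$ is immediate from the quasi-monomial construction. For $S$, the irreducibility of $X_0$ together with Corollary~\ref{c-deg} identifies the $\bZ^r$-multigraded ring $\bigoplus_{\ui\in\bZ^r_{\ge 0}}\fa_{\ui}/\fa_{>\ui}$ with the coordinate ring of the affine $T=\bG_m^r$-variety $X_0$. Decomposing each $R_m$ into $T$-weight spaces $R_{m,\chi}$ consistent with this associated graded, the filtration of $R_m$ induced by $v_w$ reads off the linear functional $\chi\mapsto\langle w,\chi\rangle$, and a Newton--Okounkov-type computation gives
\[
S_{X,\Delta}(v_w)=\lim_{m\to\infty}\frac{1}{m\,\dim R_m}\sum_{\chi\in\bZ^r}\langle w,\chi\rangle\,\dim R_{m,\chi}=\langle w,\bar c\rangle,
\]
where $\bar c$ is the centroid of the limiting rational moment polytope of $X_0$. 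Hence $S_{X,\Delta}(v_w)$ is also $\bR$-linear in $w$.

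Once both linearities are in hand, $w\mapsto A(v_w)/S(v_w)$ is fractional linear on the open cone $\bR^r_{>0}$. Since $v_{w_0}$ realises the global infimum $\delta$ and $w_0$ lies in the interior of this cone, vanishing of the gradient of a fractional linear function at an interior point forces the two defining linear forms $A$ and $\delta\cdot S$ to be proportional, so $A(v_w)/S(v_w)\equiv\delta$ throughout $\bR^r_{>0}$. Evaluating at $w=e_i$ yields divisorial valuations $E_i$ with $A_{X,\Delta}(E_i)/S_{X,\Delta}(E_i)=\delta\le 1$, contradicting the strict inequality $A/S>1$ required for K-stability. \textbf{The main obstacle} will be the rigorous proof that $w\mapsto S_{X,\Delta}(v_w)$ is $\bR$-linear on the full open cone: one must compatibly package the filtration-theoretic definition of $S$ (an asymptotic vanishing order normalised against $\vol(-K_X-\Delta)$) and the $T$-weight decomposition of $X_0$ into integration against a linear functional on a genuine rational moment polytope, and verify that this identification is uniform in $w\in\bR^r_{>0}$ rather than holding only along rational rays.
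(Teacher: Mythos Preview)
Your proposal is correct and follows essentially the same line as the paper: invoke Theorem~\ref{t-main2} to place $v=v_{w_0}$ inside a rational simplicial cone generated by divisors $E_1,\dots,E_r$ with irreducible central fibre, verify that $A_{X,\Delta}(v_w)$ and $S_{X,\Delta}(v_w)$ are linear in $w$, and use the fractional-linear argument at the interior point $w_0$ to conclude that every $E_i$ computes $\delta$.

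The one place where the paper is simpler than your plan is the ``main obstacle'' you flag. Rather than passing to a limiting moment polytope or a Newton--Okounkov body, the paper just observes that for each fixed $m$ the quantity
\[
H_{w,m}:=\sum_t t\cdot\dim\bigl(F^t_{v_w}R_m/F^{>t}_{v_w}R_m\bigr)=\sum_{\ui}\langle w,\ui\rangle\cdot\dim (R_*)_{m,\ui}
\]
is a finite sum that is manifestly linear in $w$ (using Lemma~\ref{l-d=w}), and since $S_{X,\Delta}(v_w)=\lim_m H_{w,m}/(mr\dim R_m)$ is a pointwise limit of linear functions with the limit existing everywhere on the cone, the limit is itself linear. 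No uniformity or polytope argument is needed, and this works directly for all $w\in\bR^r_{\ge 0}$, not just rational rays.
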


Recall in \cite{BX-uniqueness} and \cite{BLZ-twisted}, it is shown that the existence of a divisor $E$ that computes $\delta(X,\Delta)\le 1$ is equivalent to the existence of {\it an optimal degeneration}, that is a special degeneration of $(X,\Delta)$, such that the central fiber $(X_0,\Delta_0)$ satisfies $\delta(X_0,\Delta_0)=\delta({X,\Delta})$ which is computed by the divisor induced by the $\bG_m$-action on $(X_0,\Delta_0)$.

\begin{proof}By our assumption, we know ${\rm gr}_vR$ is finitely generated. From \cite{LX-SDCII}*{Lemma 2.10} (see Theorem \ref{t-main2}), we know that there exists a $ $ model $Y\to (X,\Delta+D)$, and prime divisors $E_1,..., E_r$ on $Y$ with an intersection $Z$ such that $(\eta(Z)\in Y,E_1+\cdots+E_r)$ yields a toroidal structure, and $v=v_{w_0}$ for some $w_0\in \mathbb{R}^r_{>0}$. Moreover, $E_1$,..., $E_r$ induce a torus degeneration of $R$ over $\mathbb{A}^r$ where $r={\rm rat. rk.}(v)$, such that if we denote by $R_*$ the central fiber of the degeneration, then $R_*={\rm gr}_vR$. 

Moreover, there is a $\bG_m\times \bG_m^r$ actions on $R_*$ that yields a grading $R_*=\bigoplus_{m\in \bZ_{\ge 0}} (R_*)_m$, where the action of the first factor is induced by the grading of $R$ and we can write $(R_*)_m=\bigoplus_{\ui\in \bZ^r_{\ge 0}} (R_*)_{m,\ui}$, where $\ui$ corresponds to a weight of $\bG^r_m$. 
By Lemma \ref{l-d=w}, for any $w \in \bR^r_{\ge 0}$ and $f\in R_{m}$, 
$v_{w}(f)= \langle w , \ui\rangle$ where $\ui$ is the unique $r$-tuple in $\bZ^r$ whose $k$-th coordinate is given by $\ord_{E_k}(f)$.

\begin{claim} The function $S_{X,\Delta}(v_{w})$ $(w\in  \bR^r_{\ge 0})$ is a linear function on the cone $ \bR^r_{\ge 0}$. 
\end{claim}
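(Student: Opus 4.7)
The plan is to use the $\bG_m^r$-weight decomposition of $R_*$ to express $\sum_j v_w(s_j^{(m)})$ as a linear function of $w$ with coefficients independent of $w$, from which linearity of $S_{X,\Delta}(v_w)$ follows immediately upon passing to the limit.

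First, I would invoke the standard expression
$$S_{X,\Delta}(v) = \frac{1}{\vol(-K_X-\Delta)} \cdot \lim_{m \to \infty} \frac{1}{mr \cdot N_m} \sum_{j} v(s_j^{(m)}),$$
valid for any valuation $v$ with $A_{X,\Delta}(v) < \infty$, where $N_m := \dim R_m$ and $\{s_j^{(m)}\}$ is any basis of $R_m$ compatible with the $\bR$-filtration $\mathcal{F}_v^{\geq t} R_m := \{f \in R_m : v(f) \geq t\}$.

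Next, using flatness of the $\bG_m^r$-equivariant degeneration $R \rightsquigarrow R_*$ from Section \ref{ss-degeneration} and the explicit form given in Proposition \ref{p-totalring}, I would construct for each $m$ a basis $\{s_j^{(m)}\}$ of $R_m$ that is simultaneously compatible with the $v_w$-filtration for \emph{every} $w \in \bR^r_{\geq 0}$. Concretely, choose a $\bG_m^r$-weight-homogeneous basis $\{\bar s_j^{(m)}\}$ of $(R_*)_m$ with $\bar s_j^{(m)} \in (R_*)_{m,\underline{i}_j}$, and lift each $\bar s_j^{(m)}$ to $s_j^{(m)} \in \mu_*\cO_Y(-\sum_k (i_j)_k E_k) \cap R_m$; flatness guarantees this yields a basis of $R_m$. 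By Lemma \ref{l-d=w}, which identifies $v_w$ with $\wt_{\xi_w}$, and the observation that any further $\bG_m^r$-weight component of $s_j^{(m)}$ in $R_*$ dominates $\underline{i}_j$ coordinatewise (so that $\langle w, \cdot\rangle$ attains its minimum at $\underline{i}_j$ for every $w \geq 0$), one obtains
$$v_w(s_j^{(m)}) = \langle w, \underline{i}_j\rangle \quad \text{for every } w \in \bR^r_{\geq 0}.$$
A dimension count against the flat-degeneration identity $\dim R_m = \sum_{\underline{i}} \dim(R_*)_{m,\underline{i}}$ then ensures this single basis is compatible with every $v_w$-filtration.

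Summing and regrouping gives
$$\sum_{j} v_w(s_j^{(m)}) = \sum_{\underline{i}} \langle w, \underline{i}\rangle \dim (R_*)_{m, \underline{i}} = \sum_{k=1}^r w_k \sum_{\underline{i}} i_k \dim (R_*)_{m, \underline{i}},$$
which is manifestly linear in $w$ with coefficients independent of $w$. Dividing by $mrN_m\vol(-K_X-\Delta)$ and letting $m \to \infty$ would then yield
$$S_{X,\Delta}(v_w) = \sum_{k=1}^r w_k \cdot a_k$$
on the whole cone $\bR^r_{\geq 0}$; specialising to $w = e_k$ identifies $a_k = S_{X,\Delta}(v_{e_k}) = S_{X,\Delta}(E_k)$, a fact that will be useful in the sequel. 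The main obstacle is producing the uniformly compatible basis $\{s_j^{(m)}\}$ and verifying the identity $v_w(s_j^{(m)}) = \langle w, \underline{i}_j\rangle$; this is where Lemma \ref{l-d=w} and the explicit form of the extended Rees degeneration do the real work, and it is also the only place where the irreducibility of $X_0$ (needed to ensure $v_w$ is a valuation) enters. Beyond this input, the argument is a routine bookkeeping, and existence of each limit $a_k$ follows from that of the usual $S$-invariant for the divisorial valuation $\ord_{E_k}$.
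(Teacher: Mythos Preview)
Your argument is correct and essentially identical to the paper's: both compute the quantity $H_{w,m}=\sum_{\ui}\langle w,\ui\rangle\dim(R_*)_{m,\ui}$ via the identity $v_w(f)=\langle w,\ui\rangle$ supplied by Lemma~\ref{l-d=w}, observe it is linear in $w$, and pass to the limit. One cosmetic slip: drop the factor $1/\vol(-K_X-\Delta)$ in your displayed formula for $S$, since dividing by $N_m=\dim R_m$ already provides the volume normalization.
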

\begin{proof}For a fixed $m\in \bN$, define
$H_{w,m}=\sum_t t\cdot F_{v_{w}}^t(R_m)/F_{v_{w}}^{>t}(R_m),$
and by definition we have
$$S_{X,\Delta}(v_{w})=\lim_{m\to \infty}\frac{H_{w,m}}{mr\cdot \dim(R_m)}.$$

Since $v_{w}(f)= \langle w , \ui\rangle$, we have 
$H_{w,m}=\sum_{\ui}\dim \langle {w},\ui \rangle \cdot \dim(R_{\ui}),$
thus $H_{w,m}$ is linear on $\mathbb{R}^r_{\ge 0}$, which then implies $S_{X,\Delta}(v_{w})$ is linear on $\mathbb{R}^r_{\ge 0}$.
\end{proof}
Returning to the proof, since $\delta(v_{w})=\frac{S_{X,\Delta}(v_{w})}{A_{X,\Delta}(v_{w})}$, and both the numerator and denominator are linear with respect to $w$, we know if $\delta(v_{w})$ attains the minimium for some $w_0\in \bR^r_{>0}$, then the function $\frac{A_{X,\Delta}(v_{w})}{S_{X,\Delta}(v_{w})}$ is constant for any $w\in U$.
In particular, for any divisorial valuation $\ord_E$ corresponding to an element $w$ in $\bN^r$, it computes the minimum of $\delta(X,\Delta)$.  
\end{proof}

\begin{bibdiv}
\begin{biblist}

\bib{AIM-conjecture}{article}{
      author={AIM},
       title={K-stability and related topics},
        date={2020},
        note={\href{http://aimpl.org/kstability/3/}{\textsf{Problems}}},
}

\bib{AZ-adjunction}{article}{
      author={Ahmadinezhad, Hamid},
      author={Zhuang, Ziquan},
       title={{K}-stability of {F}ano varieties via admissible flags},
        date={2020},
  note={\href{https://arxiv.org/abs/arXiv:2003.13788}{\textsf{arXiv:2003.13788}}},
}

\bib{BCHM}{article}{
      author={Birkar, Caucher},
      author={Cascini, Paolo},
      author={Hacon, Christopher~D.},
      author={McKernan, James},
       title={Existence of minimal models for varieties of log general type},
        date={2010},
        ISSN={0894-0347},
     journal={J. Amer. Math. Soc.},
      volume={23},
      number={2},
       pages={405\ndash 468},
         url={https://doi.org/10.1090/S0894-0347-09-00649-3},
      review={\MR{2601039}},
}

\bib{BLX-openness}{article}{
      author={Blum, Harold},
      author={Liu, Yuchen},
      author={Xu, Chenyang},
       title={Openness of {K}-semistability for {F}ano varieties},
        date={2019},
  note={\href{https://arxiv.org/abs/1907.02408}{\textsf{arXiv:1907.02408}}},
}

\bib{BLZ-twisted}{article}{
      author={Blum, Harold},
      author={Liu, Yuchen},
      author={Zhou, Chuyu},
       title={Optimal destabilization of {K}-unstable {F}ano varieties via
  stability thresholds},
        date={2019},
  note={\href{https://arxiv.org/abs/1907.05399}{\textsf{arXiv:1907.05399}}},
}

\bib{BX-uniqueness}{article}{
      author={Blum, Harold},
      author={Xu, Chenyang},
       title={Uniqueness of {${\rm K}$}-polystable degenerations of {F}ano
  varieties},
        date={2019},
        ISSN={0003-486X},
     journal={Ann. of Math. (2)},
      volume={190},
      number={2},
       pages={609\ndash 656},
         url={https://doi.org/10.4007/annals.2019.190.2.4},
      review={\MR{3997130}},
}

\bib{Cutkosky-twodimensional}{incollection}{
      author={Cutkosky, Steven~Dale},
       title={On finite and nonfinite generation of associated graded rings of
  {A}bhyankar valuations},
        date={2018},
   booktitle={Singularities, algebraic geometry, commutative algebra, and
  related topics},
   publisher={Springer, Cham},
       pages={481\ndash 490},
      review={\MR{3839808}},
}

\bib{dFKX-dualcomplex}{incollection}{
      author={de~Fernex, Tommaso},
      author={Koll\'{a}r, J\'{a}nos},
      author={Xu, Chenyang},
       title={The dual complex of singularities},
        date={2017},
   booktitle={Higher dimensional algebraic geometry---in honour of {P}rofessor
  {Y}ujiro {K}awamata's sixtieth birthday},
      series={Adv. Stud. Pure Math.},
      volume={74},
   publisher={Math. Soc. Japan, Tokyo},
       pages={103\ndash 129},
      review={\MR{3791210}},
}

\bib{KollarMori}{book}{
      author={Koll\'{a}r, J\'{a}nos},
      author={Mori, Shigefumi},
       title={Birational geometry of algebraic varieties},
      series={Cambridge Tracts in Mathematics},
   publisher={Cambridge University Press, Cambridge},
        date={1998},
      volume={134},
        ISBN={0-521-63277-3},
         url={https://doi.org/10.1017/CBO9780511662560},
        note={With the collaboration of C. H. Clemens and A. Corti, Translated
  from the 1998 Japanese original},
      review={\MR{1658959}},
}

\bib{Kollar13}{book}{
      author={Koll\'{a}r, J\'{a}nos},
       title={Singularities of the minimal model program},
      series={Cambridge Tracts in Mathematics},
   publisher={Cambridge University Press, Cambridge},
        date={2013},
      volume={200},
        ISBN={978-1-107-03534-8},
         url={https://doi.org/10.1017/CBO9781139547895},
        note={With a collaboration of S\'{a}ndor Kov\'{a}cs},
      review={\MR{3057950}},
}

\bib{Kollar-modulibook}{book}{
      author={Koll\'{a}r, J\'{a}nos},
       title={Families of varieties of general type},
        date={2020},
  note={\href{https://web.math.princeton.edu/~kollar/book/modbook20170720-hyper.pdf}{\textsf{modulibook}}},
}

\bib{Laz-positivityII}{book}{
      author={Lazarsfeld, Robert},
       title={Positivity in algebraic geometry. {II}},
      series={Ergebnisse der Mathematik und ihrer Grenzgebiete. 3. Folge. A
  Series of Modern Surveys in Mathematics [Results in Mathematics and Related
  Areas. 3rd Series. A Series of Modern Surveys in Mathematics]},
   publisher={Springer-Verlag, Berlin},
        date={2004},
      volume={49},
        ISBN={3-540-22534-X},
         url={https://doi.org/10.1007/978-3-642-18808-4},
        note={Positivity for vector bundles, and multiplier ideals},
      review={\MR{2095472}},
}

\bib{Li-minimizer}{article}{
      author={Li, Chi},
       title={Minimizing normalized volumes of valuations},
        date={2018},
        ISSN={0025-5874},
     journal={Math. Z.},
      volume={289},
      number={1-2},
       pages={491\ndash 513},
         url={https://doi.org/10.1007/s00209-017-1963-3},
      review={\MR{3803800}},
}

\bib{LX-SDCII}{article}{
      author={Li, Chi},
      author={Xu, Chenyang},
       title={Stability of {V}aluations: {H}igher {R}ational {R}ank},
        date={2018},
        ISSN={2096-6075},
     journal={Peking Math. J.},
      volume={1},
      number={1},
       pages={1\ndash 79},
         url={https://doi-org.libproxy.mit.edu/10.1007/s42543-018-0001-7},
      review={\MR{4059992}},
}

\bib{LX-SDCI}{article}{
      author={Li, Chi},
      author={Xu, Chenyang},
       title={Stability of valuations and {K}oll\'{a}r components},
        date={2020},
        ISSN={1435-9855},
     journal={J. Eur. Math. Soc. (JEMS)},
      volume={22},
      number={8},
       pages={2573\ndash 2627},
         url={https://doi.org/10.4171/JEMS/972},
      review={\MR{4118616}},
}

\bib{Tei-valuation}{incollection}{
      author={Teissier, Bernard},
       title={Overweight deformations of affine toric varieties and local
  uniformization},
        date={2014},
   booktitle={Valuation theory in interaction},
      series={EMS Ser. Congr. Rep.},
   publisher={Eur. Math. Soc., Z\"{u}rich},
       pages={474\ndash 565},
      review={\MR{3329046}},
}

\bib{Xu-quasimonomial}{article}{
      author={Xu, Chenyang},
       title={A minimizing valuation is quasi-monomial},
        date={2020},
        ISSN={0003-486X},
     journal={Ann. of Math. (2)},
      volume={191},
      number={3},
       pages={1003\ndash 1030},
         url={https://doi-org.libproxy.mit.edu/10.4007/annals.2020.191.3.6},
      review={\MR{4088355}},
}

\end{biblist}
\end{bibdiv}


\end{document}